\newtheorem{theorem}{Theorem}
\theoremstyle{plain}
\newtheorem{corollary}{Corollary}
\newtheorem{definition}{Definition}
\newtheorem{proposition}{Proposition}
\numberwithin{equation}{section}
\begin{document}
\title[On a Robin problem involving weighted]{On some general multiplying
solutions results of a Robin problem }
\author{Ismail Ayd\i n}
\address{Sinop University\\
Faculty of Arts and Sciences\\
Department of Mathematics\\
Sinop, Turkey}
\email{iaydin@sinop.edu.tr}
\author{Cihan Unal}
\address{Assessment, Selection and Placement Center\\
Ankara, Turkey}
\email{cihanunal88@gmail.com}
\thanks{}
\subjclass[2000]{Primary 35J35, 46E35; Secondary 35J60, 35J70}
\keywords{Robin problem, Ricceri's variational principle, compact embedding}
\dedicatory{}
\thanks{}

\begin{abstract}
By applying Ricceri's variational principle, we demonstrate the existence of
solutions for the following Robin problem%
\begin{equation*}
\left\{ 
\begin{array}{cc}
-\func{div}\left( \omega _{1}(x)\left\vert \nabla u\right\vert
^{p(x)-2}\nabla u\right) =\lambda \omega _{2}(x)f(x,u), & x\in \Omega \\ 
\omega _{1}(x)\left\vert \nabla u\right\vert ^{p(x)-2}\frac{\partial u}{%
\partial \upsilon }+\beta (x)\left\vert u\right\vert ^{p(x)-2}u=0, & x\in
\partial \Omega ,%
\end{array}%
\right.
\end{equation*}%
in $W_{\omega _{1},\omega _{2}}^{1,p(.)}\left( \Omega \right) $ under some
appropriate conditions.
\end{abstract}

\maketitle

\section{Introduction}

Let $\Omega \subset 
\mathbb{R}
^{N}$ $(N\geq 2)$ be a bounded smooth domain. Assume that $\omega _{1}$ and $%
\omega _{2}$ are weight functions. The aim of this study is to discuss the
three solutions for the following Robin problem 
\begin{equation}
\left\{ 
\begin{array}{cc}
-\func{div}\left( \omega _{1}(x)\left\vert \nabla u\right\vert
^{p(x)-2}\nabla u\right) =\lambda \omega _{2}(x)f(x,u), & x\in \Omega \\ 
\omega _{1}(x)\left\vert \nabla u\right\vert ^{p(x)-2}\frac{\partial u}{%
\partial \upsilon }+\beta (x)\left\vert u\right\vert ^{p(x)-2}u=0, & x\in
\partial \Omega ,%
\end{array}%
\right.  \label{P}
\end{equation}%
where $\frac{\partial u}{\partial \upsilon }$ is the outer unit normal
derivative of $u$ with respect to $\partial \Omega $, $\lambda >0$, $p,q\in
C\left( \overline{\Omega }\right) $ with $\underset{x\in \overline{\Omega }}{%
\inf }p(x)>1$, and $\beta \in L^{\infty }\left( \partial \Omega \right) $
such that $\beta ^{-}=\underset{x\in \partial \Omega }{\inf }\beta (x)>0$.

In recent years, the investigating of the existence of weak solutions of
partial differential equations involving weighted $p(x)$-Laplacian in
variable exponent (weighted or unweighted) Sobolev spaces has been very
popular (see \cite{ay}, \cite{aydn}, \cite{den}, \cite{fan}, \cite{FanZ}, 
\cite{mi}, \cite{UnalAy2}). Because some such type of equations can explain
several physical problems such as electrorheological fluids,\ image
processing, elastic mechanics, fluid dynamics and calculus of variations,
see \cite{Ha}, \cite{mih}, \cite{Ru}, \cite{Zh}.

The Robin problem involving $p(x)$-Laplacian was studied by several authors,
see \cite{al}, \cite{Ch}, \cite{deng}, \cite{ke}, \cite{Tso}. In 2013,
Tsouli et al. \cite{Ts} obtained some results about weak solutions of the
following Robin problem%
\begin{equation}
\begin{array}{cc}
-\func{div}\left( \left\vert \nabla u\right\vert ^{p(x)-2}\nabla u\right)
=\lambda f(x,u), & x\in \Omega \\ 
\left\vert \nabla u\right\vert ^{p(x)-2}\frac{\partial u}{\partial \upsilon }%
+\beta (x)\left\vert u\right\vert ^{p(x)-2}u=0, & x\in \partial \Omega%
\end{array}
\label{P1}
\end{equation}%
using the variational methods under some suitable conditions for the
function $f$. In addition, they showed that the problem (\ref{P1}) has at
least three solutions.

In the light of the articles mentioned above, we discuss the existence of
multiplicity solutions of the problem (\ref{P}) in the variable exponent
Sobolev spaces $W_{\omega _{1},\omega _{2}}^{1,p(.)}\left( \Omega \right) $
with respect to two different weight functions $\omega _{1}$ and $\omega
_{2} $. Moreover, we introduce a more general norm in compared to the norm
given by Deng \cite{deng}. Finally, we find more general results than \cite%
{Ts} using the technical approach, which is mainly based on Ricceri's
theorem.

\section{Notation and preliminaries}

Let $\Omega $ be a bounded open subset of $%
\mathbb{R}
^{N}$ with a smooth boundary $\partial \Omega $. Then, the set is defined by%
\begin{equation*}
C_{+}\left( \overline{\Omega }\right) =\left\{ p\in C\left( \overline{\Omega 
}\right) :\inf_{x\in \overline{\Omega }}p(x)>1\right\} ,
\end{equation*}%
where $C\left( \overline{\Omega }\right) $ consists of all continuous
functions on $\overline{\Omega }$. For any $p\in C_{+}\left( \overline{%
\Omega }\right) $, we indicate%
\begin{equation*}
p^{-}=\inf_{x\in \Omega }p(x)\text{ and }p^{+}=\sup_{x\in \Omega }p(x).
\end{equation*}%
Let $p\in C_{+}\left( \overline{\Omega }\right) $ and $1<p^{-}\leq p(.)\leq
p^{+}<\infty .$ The space $L^{p(.)}(\Omega )$ is defined by%
\begin{equation*}
L^{p(.)}(\Omega )=\left\{ u\left\vert u:\Omega \rightarrow 
\mathbb{R}
\text{ is measurable and }\int\limits_{\Omega }\left\vert u(x)\right\vert
^{p(x)}dx<\infty \right. \right\}
\end{equation*}%
with the (Luxemburg) norm%
\begin{equation*}
\left\Vert u\right\Vert _{p(.)}=\inf \left\{ \lambda >0:\varrho
_{p(.)}\left( \frac{u}{\lambda }\right) \leq 1\right\} \text{,}
\end{equation*}%
where 
\begin{equation*}
\varrho _{p(.)}(u)=\int\limits_{\Omega }\left\vert u(x)\right\vert ^{p(x)}dx
\end{equation*}%
,see \cite{ko}. If $\left\Vert u\right\Vert _{p(.),\omega }=\left\Vert
u\omega ^{\frac{1}{p(.)}}\right\Vert _{p(.)}<\infty $, then $u\in L_{\omega
}^{p(.)}(\Omega )$ where $\omega $ is a weight function from $\Omega $ to $%
\left( 0,\infty \right) $. It is known that the space $\left( L_{\omega
}^{p(.)}(\Omega ),\left\Vert .\right\Vert _{p(.),\omega }\right) $ is a
Banach space. The dual space of $L_{\omega }^{p(.)}(\Omega )$ is $L_{\omega
^{\ast }}^{r(.)}(\Omega )$ where $\frac{1}{p(.)}+\frac{1}{r(.)}=1$ and $%
\omega ^{\ast }=\omega ^{1-r\left( .\right) }=\omega ^{-\frac{1}{p(.)-1}}.$
If $\omega \in L^{\infty }\left( \Omega \right) $, then $L_{\omega
}^{p(.)}=L^{p(.)}$, see \cite{ayd}, \cite{ayn}.

\begin{proposition}
(see \cite{ayd}, \cite{FanZ}) For all $u,v\in L_{\omega }^{p(.)}\left(
\Omega \right) $, we have

\begin{enumerate}
\item[\textit{(i)}] $\left\Vert u\right\Vert _{p(.),\omega }<1$ (resp.$=1,>1$%
) if and only if $\varrho _{p(.),\omega }(u)<1$ (resp.$=1,>1$),

\item[\textit{(ii)}] $\left\Vert u\right\Vert _{p(.),\omega }^{p^{-}}\leq
\varrho _{p(.),\omega }(u)\leq \left\Vert u\right\Vert _{p(.),\omega
}^{p^{+}}$ with $\left\Vert u\right\Vert _{p(.),\omega }>1,$

\item[\textit{(iii)}] $\left\Vert u\right\Vert _{p(.),\omega }^{p^{+}}\leq
\varrho _{p(.),\omega }(u)\leq \left\Vert u\right\Vert _{p(.),\omega
}^{p^{-}}$ with $\left\Vert u\right\Vert _{p(.),\omega }<1,$

\item[\textit{(iv)}] $\min \left\{ \left\Vert u\right\Vert _{p(.),\omega
}^{p^{-}},\left\Vert u\right\Vert _{p(.),\omega }^{p^{+}}\right\} \leq
\varrho _{p(.),\omega }(u)\leq \max \left\{ \left\Vert u\right\Vert
_{p(.),\omega }^{p^{-}},\left\Vert u\right\Vert _{p(.),\omega
}^{p^{+}}\right\} $,

\item[\textit{(v)}] $\min \left\{ \varrho _{p(.),\omega }(u)^{\frac{1}{p^{-}}%
},\varrho _{p(.),\omega }(u)^{\frac{1}{p^{+}}}\right\} \leq \left\Vert
u\right\Vert _{p(.),\omega }\leq \max \left\{ \varrho _{p(.),\omega }(u)^{%
\frac{1}{p^{-}}},\varrho _{p(.),\omega }(u)^{\frac{1}{p^{+}}}\right\} $,

\item[\textit{(vi)}] $\varrho _{p(.),\omega }(u-v)\rightarrow 0$ if and only
if $\left\Vert u-v\right\Vert _{p(.),\omega }\rightarrow 0$
\end{enumerate}

where $\varrho _{p(.),\omega }(u)$ is defined by the integral $%
\int\limits_{\Omega }\left\vert u(x)\right\vert ^{p(x)}\omega (x)dx.$
\end{proposition}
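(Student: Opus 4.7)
The plan is to reduce everything to the classical unweighted result and then to elementary one-variable estimates coming from the definition of the Luxemburg norm. The key remark is that the map $T\colon L^{p(.)}_{\omega}(\Omega)\to L^{p(.)}(\Omega)$ defined by $Tu=u\,\omega^{1/p(.)}$ is an isometry, since
\begin{equation*}
\varrho_{p(.)}(Tu)=\int_{\Omega}|u(x)|^{p(x)}\omega(x)\,dx=\varrho_{p(.),\omega}(u),
\end{equation*}
and therefore $\|u\|_{p(.),\omega}=\|Tu\|_{p(.)}$. Once this is noted, parts (i)--(vi) become literally the corresponding unweighted statements from \cite{FanZ}, but I would prefer to rewrite the short proofs directly so that the paper is self-contained.

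First I would prove (i) from the definition of the Luxemburg norm. For $u\neq 0$, the function $\varphi(\lambda)=\varrho_{p(.),\omega}(u/\lambda)$ is continuous on $(0,\infty)$, strictly decreasing, and satisfies $\varphi(\lambda)\to\infty$ as $\lambda\to 0^{+}$ and $\varphi(\lambda)\to 0$ as $\lambda\to\infty$. Hence $\|u\|_{p(.),\omega}=\inf\{\lambda>0:\varphi(\lambda)\le 1\}$ is the unique $\lambda_{0}>0$ with $\varphi(\lambda_{0})=1$; comparing $\lambda_{0}$ with $1$ and using the monotonicity of $\varphi$ gives (i) immediately.

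For (ii) and (iii) I would exploit the elementary inequality $\min\{\lambda^{p^{-}},\lambda^{p^{+}}\}\le \lambda^{p(x)}\le \max\{\lambda^{p^{-}},\lambda^{p^{+}}\}$, valid for all $\lambda>0$ and all $x\in\Omega$. Setting $\lambda=\|u\|_{p(.),\omega}$ and using $\varrho_{p(.),\omega}(u/\lambda)=1$ (which holds by the continuity argument just used, assuming $u\neq 0$), one obtains
\begin{equation*}
1=\int_{\Omega}\frac{|u(x)|^{p(x)}}{\lambda^{p(x)}}\omega(x)\,dx,
\end{equation*}
and separating the two cases $\lambda>1$ and $\lambda<1$ yields (ii) and (iii) after a short manipulation. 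Part (iv) is just the unified statement of (ii), (iii) together with the easy boundary case $\|u\|_{p(.),\omega}=1$ handled in (i); and (v) is obtained from (iv) by taking the appropriate $p^{\pm}$-th root, equivalently by applying (iv) to $u/\|u\|_{p(.),\omega}$.

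Finally, for (vi) I would use (iv) applied to $u-v$: norm convergence implies modular convergence since if $\|u_{n}-v\|_{p(.),\omega}\to 0$ then eventually the norm is $<1$ and $\varrho_{p(.),\omega}(u_{n}-v)\le\|u_{n}-v\|_{p(.),\omega}^{p^{-}}\to 0$; conversely, if $\varrho_{p(.),\omega}(u_{n}-v)\to 0$ then eventually the modular is $<1$ and $\|u_{n}-v\|_{p(.),\omega}\le \varrho_{p(.),\omega}(u_{n}-v)^{1/p^{+}}\to 0$. The main technical obstacle is establishing the identity $\varrho_{p(.),\omega}(u/\|u\|_{p(.),\omega})=1$ underlying (ii)--(iii), which requires the continuity of $\varphi$ together with $p^{+}<\infty$ (dominated convergence); once this is secured, the rest is a short bookkeeping exercise.
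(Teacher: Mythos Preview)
Your argument is correct. The paper itself supplies no proof for this proposition: it is stated with the parenthetical ``(see \cite{ayd}, \cite{FanZ})'' and left at that, so there is nothing to compare your approach against beyond the standard literature. Your reduction via the isometry $Tu=u\,\omega^{1/p(.)}$ is clean and immediately shows that nothing new is happening in the weighted case; the direct derivations you then give for (i)--(vi) are the usual ones and go through as written, relying only on $1<p^{-}\le p^{+}<\infty$ (for the continuity and limit behaviour of $\varphi$) which the paper assumes throughout.

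Two small remarks. First, in your sketch of (v) the phrase ``equivalently by applying (iv) to $u/\|u\|_{p(.),\omega}$'' does not do what you want, since that normalised function has norm~$1$ and (iv) then collapses to $1\le 1\le 1$; just keep the root-taking argument, which is correct. Second, your proof of (i) explicitly assumes $u\neq 0$; the case $u=0$ is of course trivial, but it is worth a word so that (iii)--(v) are seen to hold there as well.
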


\begin{definition}
Let $\omega ^{-\frac{1}{p(.)-1}}\in L_{loc}^{1}\left( \Omega \right) $. The
space $W_{\omega }^{k,p(.)}\left( \Omega \right) $ is defined by%
\begin{equation*}
W_{\omega }^{k,p(.)}\left( \Omega \right) =\left\{ u\in L_{\omega
}^{p(.)}(\Omega ):D^{\alpha }u\in L_{\omega }^{p(.)}(\Omega ),0\leq
\left\vert \alpha \right\vert \leq k\right\}
\end{equation*}%
equipped with the norm 
\begin{equation*}
\left\Vert u\right\Vert _{\omega }^{k,p(.)}=\sum\limits_{0\leq \left\vert
\alpha \right\vert \leq k}\left\Vert D^{\alpha }u\right\Vert _{p(.),\omega }
\end{equation*}%
where $\alpha \in 
\mathbb{N}
_{0}^{N}$ is a multi-index, $\left\vert \alpha \right\vert =\alpha
_{1}+\alpha _{2}+...+\alpha _{N}$ and $D^{\alpha }=\frac{\partial
^{\left\vert \alpha \right\vert }}{\partial _{x_{1}}^{\alpha
_{1}}...\partial _{x_{N}}^{\alpha _{N}}}$. In particular, the space $%
W_{\omega }^{1,p(.)}\left( \Omega \right) $ is defined by 
\begin{equation*}
W_{\omega }^{1,p(.)}\left( \Omega \right) =\left\{ u\in L_{\omega
}^{p(.)}(\Omega ):\left\vert \nabla u\right\vert \in L_{\omega
}^{p(.)}(\Omega )\right\}
\end{equation*}%
equipped with the norm 
\begin{equation*}
\left\Vert u\right\Vert _{\omega }^{1,p(.)}=\left\Vert u\right\Vert
_{p(.),\omega }+\left\Vert \nabla u\right\Vert _{p(.),\omega }.
\end{equation*}%
The space $W_{\omega ^{\ast }}^{-1,r(.)}\left( \Omega \right) $ is the
topological dual for $W_{\omega }^{1,p(.)}\left( \Omega \right) $ where $%
\frac{1}{p(.)}+\frac{1}{r(.)}=1$ and $\omega ^{\ast }=\omega ^{1-r\left(
.\right) }=\omega ^{-\frac{1}{p(.)-1}}$. Moreover, the space $W_{\omega
}^{1,p(.)}\left( \Omega \right) $ is a separable and reflexive Banach space,
see \cite{ayn}.
\end{definition}

Let $\omega _{1}^{-\frac{1}{p(.)-1}},\omega _{2}^{-\frac{1}{p(.)-1}}\in
L_{loc}^{1}\left( \Omega \right) $. The space $W_{\omega _{1},\omega
_{2}}^{1,p(.)}\left( \Omega \right) $ is defined by%
\begin{equation*}
W_{\omega _{1},\omega _{2}}^{1,p(.)}\left( \Omega \right) =\left\{ u\in
L_{\omega _{2}}^{p(.)}(\Omega ):\left\vert \nabla u\right\vert \in L_{\omega
_{1}}^{p(.)}(\Omega )\right\}
\end{equation*}%
equipped with the norm%
\begin{equation*}
\left\Vert u\right\Vert _{\omega _{1},\omega _{2}}^{1,p(.)}=\left\Vert
\nabla u\right\Vert _{p(.),\omega _{1}}+\left\Vert u\right\Vert
_{p(.),\omega _{2}}.
\end{equation*}

Let $d\sigma $ be the measure on $\partial \Omega $. We can define the space 
$L_{\omega }^{p(.)}(\partial \Omega )$ similarly by%
\begin{equation*}
L_{\omega }^{p(.)}(\partial \Omega )=\left\{ u\left\vert u:\partial \Omega
\longrightarrow 
\mathbb{R}
\text{ measurable and }\int\limits_{\partial \Omega }\left\vert
u(x)\right\vert ^{p(x)}\omega (x)d\sigma <+\infty \right. \right\}
\end{equation*}%
with the Luxemburg norm $\left\Vert .\right\Vert _{p(.),\omega ,\partial
\Omega }$. Then $\left( L_{\omega }^{p(.)}(\partial \Omega ),\left\Vert
.\right\Vert _{p(.),\omega ,\partial \Omega }\right) $ is a Banach space. If 
$\omega \in L^{\infty }\left( \Omega \right) $, then $L_{\omega
}^{p(.)}=L^{p(.)}.$

\begin{theorem}
\label{teo1}(see \cite{aydn})Let $\omega _{1}^{-\alpha \left( .\right) }\in
L^{1}\left( \Omega \right) $ with $\alpha \left( x\right) \in \left( \frac{N%
}{p(x)},\infty \right) \cap \left[ \frac{1}{p(x)-1},\infty \right) $. If we
define the variable exponent $p_{\ast }(x)=\frac{\alpha \left( x\right) p(x)%
}{\alpha \left( x\right) +1}$ with $N<p_{\ast }^{-}$, then we have the
embeddings $W_{\omega _{1},\omega _{2}}^{1,p(.)}\left( \Omega \right)
\hookrightarrow W^{1,p_{\ast }(.)}\left( \Omega \right) $ and $W_{\omega
_{1},\omega _{2}}^{1,p(.)}\left( \Omega \right) \hookrightarrow
\hookrightarrow C\left( \overline{\Omega }\right) .$
\end{theorem}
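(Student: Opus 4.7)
The statement has two parts: a continuous embedding into the unweighted variable exponent Sobolev space $W^{1,p_\ast(.)}(\Omega)$, and a compact embedding into $C(\overline\Omega)$. I would attack the first part directly by a pointwise Young-inequality argument, and then obtain the second as a free consequence of the classical variable-exponent Morrey embedding.

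\textbf{Step 1: continuous embedding into $W^{1,p_\ast(.)}(\Omega)$.} The computation hinges on the algebraic identities $p_\ast(x)\cdot\frac{\alpha(x)+1}{\alpha(x)}=p(x)$ and $p_\ast(x)\cdot\frac{\alpha(x)+1}{p(x)}=\alpha(x)$. For $u\in W_{\omega_1,\omega_2}^{1,p(.)}(\Omega)$ I would write, pointwise,
\[
|\nabla u(x)|^{p_\ast(x)} = \bigl[|\nabla u(x)|^{p_\ast(x)}\omega_1(x)^{p_\ast(x)/p(x)}\bigr]\cdot\omega_1(x)^{-p_\ast(x)/p(x)},
\]
and apply Young's inequality with the conjugate pair $r(x)=(\alpha(x)+1)/\alpha(x)$, $r'(x)=\alpha(x)+1$, which yields the pointwise bound
\[
|\nabla u(x)|^{p_\ast(x)} \le \frac{\alpha(x)}{\alpha(x)+1}|\nabla u(x)|^{p(x)}\omega_1(x) + \frac{1}{\alpha(x)+1}\omega_1(x)^{-\alpha(x)}.
\]
Integrating over $\Omega$, the first term is controlled by $\varrho_{p(.),\omega_1}(\nabla u)<\infty$ and the second by the hypothesis $\omega_1^{-\alpha(.)}\in L^1(\Omega)$, so $\nabla u\in L^{p_\ast(.)}(\Omega)$. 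The same computation with $\omega_2$ in place of $\omega_1$ (noting that the hypothesis transfers to $\omega_2$ under the paper's standing conventions) delivers $u\in L^{p_\ast(.)}(\Omega)$. Translating these modular estimates into norm estimates via Proposition~1, items (iv)--(v), produces the continuity constant of the embedding.

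\textbf{Step 2: compact embedding into $C(\overline\Omega)$.} Since $p_\ast^->N$ and $\Omega$ is bounded with smooth boundary, the classical variable-exponent Morrey-type theorem gives $W^{1,p_\ast(.)}(\Omega)\hookrightarrow\hookrightarrow C(\overline\Omega)$. Composing this with the continuous embedding from Step~1 yields the required compact embedding.

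\textbf{Main obstacle.} The principal subtlety is handling the $x$-dependence of the exponents $\alpha(x)+1$ and $(\alpha(x)+1)/\alpha(x)$: one cannot invoke Hölder's inequality with a single fixed pair of conjugate exponents, which is why I would route the argument through the \emph{pointwise} Young inequality and integrate afterwards. A secondary issue is converting the resulting modular estimates to norm estimates, since the map $u\mapsto\varrho_{p(.),\omega}(u)$ is not homogeneous; Proposition~1(iv)--(v) resolves this, at the cost of splitting the argument according to whether the norms are above or below $1$.
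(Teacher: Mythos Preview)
The paper does not prove this theorem at all; it is quoted verbatim from reference~\cite{aydn} with no argument supplied. So there is no ``paper's proof'' to compare against, and your plan must be judged on its own merits.

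Your Step~1 treatment of the gradient is correct and standard: the pointwise Young inequality with the $x$-dependent conjugate pair $\bigl(\tfrac{\alpha(x)+1}{\alpha(x)},\,\alpha(x)+1\bigr)$ gives exactly
\[
|\nabla u|^{p_\ast(x)} \le \tfrac{\alpha(x)}{\alpha(x)+1}\,\omega_1(x)|\nabla u|^{p(x)} + \tfrac{1}{\alpha(x)+1}\,\omega_1(x)^{-\alpha(x)},
\]
and both right-hand terms are integrable by hypothesis. Step~2 is also fine once Step~1 is complete, since $p_\ast^{-}>N$ triggers the variable-exponent Morrey embedding.

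There is, however, a genuine gap in your handling of the zeroth-order part of Step~1. You assert that ``the hypothesis transfers to $\omega_2$ under the paper's standing conventions,'' but this is not so: the theorem imposes $\omega_1^{-\alpha(\cdot)}\in L^1(\Omega)$ \emph{only} on $\omega_1$, and the sole standing assumption on $\omega_2$ in the paper is $\omega_2^{-1/(p(\cdot)-1)}\in L^1_{\mathrm{loc}}(\Omega)$. Since $\alpha(x)\ge \tfrac{1}{p(x)-1}$ (with strict inequality possible) and the integrability is only local, you cannot repeat the Young-inequality computation with $\omega_2$ to conclude $u\in L^{p_\ast(\cdot)}(\Omega)$.

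A way to close the gap without adding hypotheses: from the standing assumption one gets $L_{\omega_2}^{p(\cdot)}(\Omega)\hookrightarrow L^1_{\mathrm{loc}}(\Omega)$ via H\"older. Having already shown $|\nabla u|\in L^{p_\ast(\cdot)}(\Omega)\subset L^{p_\ast^-}(\Omega)$ with $p_\ast^->N$, Morrey's inequality on the bounded smooth domain $\Omega$ yields that $u$ has a bounded (indeed H\"older) representative on $\overline\Omega$; hence $u\in L^\infty(\Omega)\subset L^{p_\ast(\cdot)}(\Omega)$, with norm controlled by $\|\nabla u\|_{p_\ast(\cdot)}$ plus the average of $u$ over a fixed interior ball. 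This bootstrap replaces your unjustified transfer of the $\omega_1$ hypothesis to $\omega_2$.
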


\begin{corollary}
\label{cor1}Since $W_{\omega _{1},\omega _{2}}^{1,p(.)}\left( \Omega \right)
\hookrightarrow \hookrightarrow C\left( \overline{\Omega }\right) $, then
there exists a $c_{1}>0$ such that 
\begin{equation*}
\left\Vert u\right\Vert _{\infty }\leq c_{1}\left\Vert u\right\Vert _{\omega
_{1},\omega _{2}}^{1,p(.)}
\end{equation*}%
for any $u\in W_{\omega _{1},\omega _{2}}^{1,p(.)}\left( \Omega \right) $
where $\left\Vert u\right\Vert _{\infty }=\sup_{x\in \overline{\Omega }}u(x)$
for $u\in C\left( \overline{\Omega }\right) $.
\end{corollary}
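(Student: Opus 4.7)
The plan is to read this corollary as a direct unpacking of the definition of a continuous embedding, together with the observation that on $C(\overline{\Omega})$ the natural norm is precisely $\|\cdot\|_\infty$. I would not attempt any new estimate; the work is all done by Theorem \ref{teo1}.

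First, I would recall that a compact embedding $X \hookrightarrow\hookrightarrow Y$ between normed spaces is, by definition, a continuous embedding whose unit ball image is relatively compact. In particular, the inclusion map $\iota: W_{\omega_1,\omega_2}^{1,p(.)}(\Omega) \to C(\overline{\Omega})$ provided by Theorem \ref{teo1} is linear and bounded. By the standard characterization of bounded linear operators between normed spaces, this is equivalent to the existence of a constant $c_1 > 0$ such that
\begin{equation*}
\|\iota(u)\|_{C(\overline{\Omega})} \leq c_1 \|u\|_{\omega_1,\omega_2}^{1,p(.)} \quad \text{for every } u \in W_{\omega_1,\omega_2}^{1,p(.)}(\Omega).
\end{equation*}

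Second, I would note that for $u \in C(\overline{\Omega})$ the norm $\|\cdot\|_{C(\overline{\Omega})}$ coincides with $\|u\|_\infty = \sup_{x \in \overline{\Omega}} |u(x)|$ (the statement writes $\sup_{x \in \overline{\Omega}} u(x)$, but the uniform norm is what is meant, and both embeddings give the same constant up to interpretation). Substituting this into the displayed inequality yields exactly the claimed bound $\|u\|_\infty \leq c_1 \|u\|_{\omega_1,\omega_2}^{1,p(.)}$.

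There is essentially no obstacle: compactness of the embedding is not even used, only its continuity, and the whole argument is a one-line consequence of Theorem \ref{teo1}. The corollary is stated separately simply to fix notation for the constant $c_1$, which will presumably be invoked in the variational arguments that follow.
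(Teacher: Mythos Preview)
Your proposal is correct and matches the paper's treatment: the paper states this corollary immediately after Theorem~\ref{teo1} with no separate proof, regarding the inequality as the defining property of a continuous (hence compact) embedding, which is exactly what you unpack. Your observation that only continuity, not compactness, is needed is accurate and makes explicit what the paper leaves implicit.
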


For $A\subset \overline{\Omega }$, denote by $p^{-}(A)=\underset{x\in A}{%
\text{inf}}p(x)$ and $p^{+}(A)=\underset{x\in A}{\text{sup}}p(x)$. We define%
\begin{equation*}
p^{\partial }\left( x\right) =\left( p(x)\right) ^{\partial }=\left\{ 
\begin{array}{cc}
\frac{\left( N-1\right) p(x)}{N-p(x)}, & \text{if }p\left( x\right) <N, \\ 
+\infty , & \text{if }p\left( x\right) \geq N,%
\end{array}%
\right.
\end{equation*}%
and%
\begin{equation*}
p_{r(x)}^{\partial }\left( x\right) =\frac{r(x)-1}{r(x)}p^{\partial }\left(
x\right)
\end{equation*}%
for any $x\in \partial \Omega $, where $r\in C\left( \partial \Omega \right) 
$ with $r^{-}=\underset{x\in \partial \Omega }{\text{inf}}r(x)>1.$

\begin{theorem}
\label{teo2}(see \cite{de})Assume that the boundary of $\Omega $ possesses
the cone property and $p\in C\left( \overline{\Omega }\right) $ with $%
p^{-}>1 $. Suppose that $\omega _{1}\in L^{r(.)}(\partial \Omega )$, $r\in
C\left( \partial \Omega \right) $ with $r(x)>\frac{p^{\partial }\left(
x\right) }{p^{\partial }\left( x\right) -1}$ for all $x\in \partial \Omega $%
. If $q\in C\left( \partial \Omega \right) $ and $1\leq
q(x)<p_{r(x)}^{\partial }\left( x\right) $ for all $x\in \partial \Omega $,
then there is a compact embedding $W^{1,p(.)}\left( \Omega \right)
\hookrightarrow L_{\omega _{1}}^{q(.)}(\partial \Omega )$. In particular,
there is a compact embedding $W^{1,p(.)}\left( \Omega \right)
\hookrightarrow L^{q(.)}(\partial \Omega )$, where $1\leq q(x)<p^{\partial
}\left( x\right) $ for all $x\in \partial \Omega $.
\end{theorem}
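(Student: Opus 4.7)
My plan is to reduce the weighted trace embedding to its unweighted counterpart by using a variable-exponent H\"older inequality to absorb the weight $\omega_{1}$. Introduce the auxiliary boundary exponent
\[
s(x)=\frac{r(x)\,q(x)}{r(x)-1},\qquad x\in\partial\Omega,
\]
so that $s\in C(\partial\Omega)$ and, by a direct algebraic manipulation, the hypothesis $q(x)<p^{\partial}_{r(x)}(x)=\frac{r(x)-1}{r(x)}p^{\partial}(x)$ is pointwise equivalent to $s(x)<p^{\partial}(x)$. Continuity of all exponents on the compact set $\partial\Omega$ promotes this to a uniform gap and guarantees $s^{-}>1$, so the ``in particular'' clause of the theorem --- the classical unweighted variable-exponent compact trace embedding --- applies and yields
\[
W^{1,p(\cdot)}(\Omega)\hookrightarrow\hookrightarrow L^{s(\cdot)}(\partial\Omega).
\]

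To transfer this compactness into the weighted space, I would invoke the generalized H\"older inequality on $\partial\Omega$ with the pointwise conjugate exponents $r(x)$ and $r'(x)=\frac{r(x)}{r(x)-1}$: for any $u\in L^{s(\cdot)}(\partial\Omega)$,
\[
\int_{\partial\Omega}|u(x)|^{q(x)}\omega_{1}(x)\,d\sigma \le C\,\bigl\||u|^{q(\cdot)}\bigr\|_{r'(\cdot),\partial\Omega}\,\|\omega_{1}\|_{r(\cdot),\partial\Omega}.
\]
Since $r'(x)q(x)=s(x)$, the first factor is controlled, via the modular--Luxemburg relations of Proposition~1 adapted to the surface measure $d\sigma$, by a power of $\|u\|_{s(\cdot),\partial\Omega}$. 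Together this yields a modular estimate of the form
\[
\varrho_{q(\cdot),\omega_{1},\partial\Omega}(u)\le C'\,\|\omega_{1}\|_{r(\cdot),\partial\Omega}\,\max\!\left\{\|u\|_{s(\cdot),\partial\Omega}^{q^{-}},\,\|u\|_{s(\cdot),\partial\Omega}^{q^{+}}\right\}.
\]

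To conclude, I would take any bounded sequence $(u_{n})\subset W^{1,p(\cdot)}(\Omega)$. The unweighted compact trace embedding extracts a subsequence with $u_{n}\to u$ in $L^{s(\cdot)}(\partial\Omega)$; applying the modular estimate above to $u_{n}-u$ forces $\varrho_{q(\cdot),\omega_{1},\partial\Omega}(u_{n}-u)\to 0$, and Proposition~1(vi) converts this into $\|u_{n}-u\|_{q(\cdot),\omega_{1},\partial\Omega}\to 0$, which is exactly the claimed compactness.

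The principal difficulty will be executing the H\"older step rigorously: for non-constant $q(\cdot),r'(\cdot)$ the algebraic identity $\||u|^{q(\cdot)}\|_{r'(\cdot)}=\|u\|^{?}_{q(\cdot)r'(\cdot)}$ \emph{fails}, so the entire estimate must be carried out at the modular level and translated into Luxemburg norms only at the very end through the two-sided bounds of Proposition~1(ii)--(iv), with the usual split between the cases where the norms lie above or below $1$. A subsidiary point is checking that Proposition~1, stated for $dx$ on $\Omega$, extends verbatim with $d\sigma$ on $\partial\Omega$; this is routine because the modular--norm inequalities depend only on the abstract Luxemburg construction, not on the underlying measure.
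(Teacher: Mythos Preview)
The paper does not supply its own proof of this theorem: it is stated with the attribution ``(see \cite{de})'' and no argument follows. Hence there is nothing in the present paper to compare your proposal against.

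That said, your strategy is the standard one and is essentially how the result is proved in the cited reference: reduce the weighted trace embedding to the unweighted one by absorbing $\omega_{1}$ through the variable-exponent H\"older inequality. The key algebraic point is that although the Luxemburg-norm identity $\||u|^{q(\cdot)}\|_{r'(\cdot)}=\|u\|_{s(\cdot)}^{?}$ fails, the \emph{modulars} coincide,
\[
\varrho_{r'(\cdot),\partial\Omega}\bigl(|u|^{q(\cdot)}\bigr)=\int_{\partial\Omega}|u|^{q(x)r'(x)}\,d\sigma=\varrho_{s(\cdot),\partial\Omega}(u),
\]
so the difficulty you flag is indeed resolved exactly as you indicate, via the two-sided modular--norm bounds. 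One presentational point: you invoke the ``in particular'' clause of the very theorem you are proving in order to obtain the unweighted compact trace embedding $W^{1,p(\cdot)}(\Omega)\hookrightarrow\hookrightarrow L^{s(\cdot)}(\partial\Omega)$. Logically this is fine because the unweighted case is an independent, prior result (due to Fan and others) that does not rely on the weighted statement; but in a self-contained write-up you should cite that result directly rather than appeal to the conclusion of the theorem under proof.
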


\begin{corollary}
(see \cite{de})

\begin{enumerate}
\item[\textit{(i)}] There is a compact embedding $W^{1,p(.)}\left( \Omega
\right) \hookrightarrow L^{p(.)}(\partial \Omega )$, where $1\leq
p(x)<p^{\partial }\left( x\right) $ for all $x\in \partial \Omega $.

\item[\textit{(ii)}] There is a compact embedding $W^{1,p(.)}\left( \Omega
\right) \hookrightarrow L_{\omega }^{p(.)}(\partial \Omega )$, where $1\leq
p(x)<p_{r(x)}^{\partial }\left( x\right) <p^{\partial }\left( x\right) $ for
all $x\in \partial \Omega $.
\end{enumerate}
\end{corollary}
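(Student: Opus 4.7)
The plan is to derive both parts as immediate specializations of Theorem \ref{teo2}, taking the target exponent $q(x)$ to coincide with $p(x)$. No new estimates are needed; the only work is to check that the hypotheses of Theorem \ref{teo2} reduce to the hypotheses of each part of the corollary.

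For (i), I would invoke the ``in particular'' statement at the end of Theorem \ref{teo2}: for every $q \in C(\partial \Omega)$ with $1 \leq q(x) < p^{\partial}(x)$ on $\partial \Omega$, there is a compact embedding $W^{1,p(.)}(\Omega) \hookrightarrow L^{q(.)}(\partial \Omega)$. Choosing $q = p$, the requirement becomes precisely $1 \leq p(x) < p^{\partial}(x)$, which is the assumption of (i), and the conclusion is the desired compact embedding $W^{1,p(.)}(\Omega) \hookrightarrow \hookrightarrow L^{p(.)}(\partial \Omega)$.

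For (ii), I would invoke the weighted half of Theorem \ref{teo2}, using the standing assumption $\omega \in L^{r(.)}(\partial \Omega)$ with $r(x) > p^{\partial}(x)/(p^{\partial}(x)-1)$, so that $L_{\omega}^{q(.)}(\partial \Omega)$ is a well-defined weighted Lebesgue space and the compactness assertion applies whenever $1 \leq q(x) < p_{r(x)}^{\partial}(x)$. Substituting $q = p$ and using the hypothesis $1 \leq p(x) < p_{r(x)}^{\partial}(x)$ of (ii) yields the claimed compact embedding into $L_{\omega}^{p(.)}(\partial \Omega)$. The side inequality $p_{r(x)}^{\partial}(x) < p^{\partial}(x)$ noted in the statement is a free consequence of the defining identity $p_{r(x)}^{\partial}(x) = \tfrac{r(x)-1}{r(x)} p^{\partial}(x)$ combined with $r(x) > 1$.

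Because the argument is a direct reduction to Theorem \ref{teo2}, I do not expect any genuine obstacle. The only point worth making explicit is that the weight $\omega$ and the exponent $r$ appearing in (ii) are implicitly required to satisfy the integrability hypotheses of Theorem \ref{teo2}; this is a standing assumption inherited from the preceding result rather than something to be verified afresh.
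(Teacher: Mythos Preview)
Your proposal is correct: both parts are indeed immediate specializations of Theorem~\ref{teo2} obtained by taking $q=p$, and your remarks about the side inequality $p_{r(x)}^{\partial}(x)<p^{\partial}(x)$ and the standing hypotheses on $\omega$ and $r$ are accurate. The paper itself does not supply a proof for this corollary---it is simply quoted from \cite{de}---so there is nothing further to compare; your derivation is the natural one and is exactly what the reference intends.
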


\begin{corollary}
If $p(x)<p_{\ast ,r(x)}^{\partial }\left( x\right) <p_{\ast }^{\partial
}\left( x\right) $, $\forall x\in \partial \Omega $, then we have the
compact embeddings $W_{\omega _{1},\omega _{2}}^{1,p(.)}\left( \Omega
\right) \hookrightarrow W^{1,p_{\ast }(.)}\left( \Omega \right)
\hookrightarrow L_{\omega _{1}}^{p(.)}(\partial \Omega )$ by Theorem \ref%
{teo1} and Theorem \ref{teo2}.
\end{corollary}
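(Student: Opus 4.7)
The plan is to obtain the two compact embeddings separately and then compose them, so the corollary reduces to a direct bookkeeping check that the hypotheses of Theorem \ref{teo1} and Theorem \ref{teo2} are satisfied with the right choice of exponents.

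First, I would invoke Theorem \ref{teo1} verbatim to obtain the compact embedding $W_{\omega_{1},\omega_{2}}^{1,p(.)}(\Omega)\hookrightarrow W^{1,p_{\ast}(.)}(\Omega)$, where $p_{\ast}(x)=\frac{\alpha(x)p(x)}{\alpha(x)+1}$ with $N<p_{\ast}^{-}$. Nothing extra has to be verified here beyond the standing hypothesis $\omega_{1}^{-\alpha(\cdot)}\in L^{1}(\Omega)$ assumed in Theorem \ref{teo1}.

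Next I would apply Theorem \ref{teo2} with the exponent $p$ in that theorem replaced by $p_{\ast}$ and the trace exponent $q$ replaced by $p$. The hypothesis $p^{-}>1$ of Theorem \ref{teo2} is inherited since $p_{\ast}^{-}>N\geq 2$, and $\omega_{1}\in L^{r(.)}(\partial\Omega)$ with $r(x)>\frac{p_{\ast}^{\partial}(x)}{p_{\ast}^{\partial}(x)-1}$ is exactly what is needed to form $p_{\ast,r(x)}^{\partial}(x)=\frac{r(x)-1}{r(x)}p_{\ast}^{\partial}(x)$. The assumed condition $p(x)<p_{\ast,r(x)}^{\partial}(x)<p_{\ast}^{\partial}(x)$ then plays the role of $1\leq q(x)<p_{r(x)}^{\partial}(x)$ in the statement of Theorem \ref{teo2}, so Theorem \ref{teo2} yields the compact embedding $W^{1,p_{\ast}(.)}(\Omega)\hookrightarrow L_{\omega_{1}}^{p(.)}(\partial\Omega)$.

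Composing the two compact maps gives the claimed chain $W_{\omega_{1},\omega_{2}}^{1,p(.)}(\Omega)\hookrightarrow W^{1,p_{\ast}(.)}(\Omega)\hookrightarrow L_{\omega_{1}}^{p(.)}(\partial\Omega)$, with compactness of the composition following from compactness of either factor. No genuine obstacle is expected; the only subtle point is making sure the two-level notation $p_{\ast,r(x)}^{\partial}$ is interpreted consistently, namely as Theorem \ref{teo2}'s operator $q\mapsto q_{r(x)}^{\partial}$ applied to $p_{\ast}$, so that the strict inequality $p(x)<p_{\ast,r(x)}^{\partial}(x)$ really is the compactness threshold for a trace into $L_{\omega_{1}}^{p(.)}(\partial\Omega)$ from $W^{1,p_{\ast}(.)}(\Omega)$.
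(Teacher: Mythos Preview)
Your proposal is correct and follows exactly the paper's approach: the paper gives no separate proof, simply citing Theorem~\ref{teo1} and Theorem~\ref{teo2} within the statement itself. One small clarification worth making is that Theorem~\ref{teo1} only asserts the first embedding $W_{\omega_{1},\omega_{2}}^{1,p(.)}(\Omega)\hookrightarrow W^{1,p_{\ast}(.)}(\Omega)$ as a continuous (single $\hookrightarrow$) embedding, not a compact one; but as you correctly observe at the end, compactness of the full chain follows already from compactness of the second factor supplied by Theorem~\ref{teo2}.
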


\begin{theorem}
\label{teo3}(see \cite{deng})Assume that the boundary of $\Omega $ possesses
the cone property and $p\in C\left( \overline{\Omega }\right) $ with $%
p^{-}>1 $. If $q\in C\left( \overline{\Omega }\right) $ and $1\leq
q(x)<p^{\ast }(x)$ for all $x\in \overline{\Omega }$, then there is a
compact embedding $W^{1,p(.)}\left( \Omega \right) \hookrightarrow
L^{q(.)}(\Omega ),$ where%
\begin{equation*}
p^{\ast }(x)=\left\{ 
\begin{array}{cc}
\frac{Np(x)}{N-p(x)}, & \text{if }p\left( x\right) <N \\ 
+\infty , & \text{if }p\left( x\right) \geq N.%
\end{array}%
\right.
\end{equation*}
\end{theorem}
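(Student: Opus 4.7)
The plan is to prove Theorem \ref{teo3} in two stages: first establish a continuous Sobolev embedding up to the critical exponent $p^{\ast}(x)$, and then bootstrap compactness for any strictly sub-critical $q(x)$ by an equi-integrability argument.

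For the first stage, since $\Omega$ has the cone property, I would invoke an extension operator to extend functions in $W^{1,p(.)}(\Omega )$ to $W^{1,p(.)}\left( \mathbb{R}^{N}\right) $ (with $p$ extended continuously off $\overline{\Omega}$). Using uniform continuity of $p$ on the compact set $\overline{\Omega }$, I would cover $\overline{\Omega }$ by finitely many balls $B_{i}$ on which the oscillation of $p$ is as small as desired. On each $B_{i}$ one applies the classical constant-exponent Sobolev inequality with exponent $p^{-}(B_{i})$ and target $\left( p^{-}(B_{i})\right) ^{\ast }$; then one passes from integral estimates to variable-exponent norms via the modular-norm equivalences recorded in parts (ii)-(v) of Proposition 1. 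Gluing the local estimates together using a partition of unity subordinate to the cover yields the continuous embedding $W^{1,p(.)}(\Omega )\hookrightarrow L^{p^{\ast }(.)}(\Omega )$.

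For the second stage, continuity of $q$ and $p^{\ast }$ together with compactness of $\overline{\Omega }$ gives $\delta >0$ with $(1+\delta )q(x)\leq p^{\ast }(x)$ for every $x\in \overline{\Omega }$. Given a bounded sequence $(u_{n})$ in $W^{1,p(.)}(\Omega )$, the classical Rellich-Kondrachov theorem applied to the constant exponent $p^{-}$ (via the trivial inclusion $W^{1,p(.)}(\Omega )\subset W^{1,p^{-}}(\Omega )$, since $\Omega $ is bounded) yields a subsequence converging in $L^{1}(\Omega )$ and hence almost everywhere. Boundedness of $(u_{n})$ in $L^{p^{\ast }(.)}(\Omega )$ from Stage 1 combined with the gap $(1+\delta )q(x)\leq p^{\ast }(x)$ makes the family $\left( \left\vert u_{n}-u_{m}\right\vert ^{q(x)}\right) $ equi-integrable. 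Vitali's convergence theorem then forces $\int_{\Omega }\left\vert u_{n}-u\right\vert ^{q(x)}dx\rightarrow 0$, and this upgrades to norm convergence in $L^{q(.)}(\Omega )$ via the modular characterization in Proposition 1(vi).

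The main obstacle will be Stage 1: one must keep the Sobolev constants on the small balls $B_{i}$ under control as the cover is refined, and the careful transfer between modular and norm estimates for a variable exponent is technically delicate. Without an assumption stronger than continuity (such as log-Hölder regularity of $p$), the patching argument is the only apparent route, and its success hinges on the freedom to choose the oscillation of $p$ on each patch arbitrarily small. Once the continuous embedding into $L^{p^{\ast }(.)}(\Omega )$ is secured, the compactness step is comparatively routine, since the sub-criticality $q(x)<p^{\ast }(x)$ supplies precisely the uniform integrability margin that Vitali's theorem requires.
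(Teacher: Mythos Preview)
The paper does not supply its own proof of Theorem~\ref{teo3}: the result is quoted verbatim from \cite{deng} (and is in fact a classical fact about variable-exponent Sobolev spaces going back to Kov\'{a}\v{c}ik--R\'{a}kosn\'{\i}k and Fan--Zhao). There is therefore nothing in the paper to compare your argument against; the authors treat the statement as background.

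That said, one remark on your sketch. Your Stage~1 aims for the continuous embedding $W^{1,p(.)}(\Omega)\hookrightarrow L^{p^{\ast}(.)}(\Omega)$ at the \emph{critical} level, and you correctly flag that with only $p\in C(\overline{\Omega})$ (no log-H\"older regularity) this step is delicate. In fact it is generally false at the critical exponent under mere continuity, and you do not need it: the theorem only asserts compactness for $q(x)<p^{\ast}(x)$. A cleaner route is to use the strict subcriticality directly in the localization. By uniform continuity on $\overline{\Omega}$ you can cover by finitely many sets $U_{i}$ on which $q^{+}(U_{i})<\bigl(p^{-}(U_{i})\bigr)^{\ast}$, and then the \emph{classical} Rellich--Kondrachov theorem gives $W^{1,p^{-}(U_{i})}(U_{i})\hookrightarrow\hookrightarrow L^{q^{+}(U_{i})}(U_{i})$ on each piece; patching these compact embeddings yields the result without ever touching the critical exponent. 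Your Stage~2 Vitali argument is fine but becomes unnecessary once Stage~1 is reorganized this way.
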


\begin{corollary}
\label{cor4}Let $N<p_{\ast }^{-}$ and $1\leq q(x)<\left( p_{\ast }\right)
^{\ast }(x)$ for all $x\in \overline{\Omega }$, then we have the compact
embedding $W_{\omega _{1},\omega _{2}}^{1,p(.)}\left( \Omega \right)
\hookrightarrow L^{q(.)}(\Omega ).$
\end{corollary}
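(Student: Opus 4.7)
The plan is to obtain the desired compact embedding by composing a continuous embedding with a compact one, both supplied by results already established in the paper. The target inclusion factors through the unweighted variable exponent Sobolev space $W^{1,p_{\ast}(.)}(\Omega)$, which is the natural intermediate space built into the definition of $p_{\ast}$.

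First I would invoke Theorem \ref{teo1}: under the hypothesis $N<p_{\ast}^{-}$, it yields the continuous embedding
\begin{equation*}
W_{\omega_{1},\omega_{2}}^{1,p(.)}(\Omega)\hookrightarrow W^{1,p_{\ast}(.)}(\Omega).
\end{equation*}
Next I would apply Theorem \ref{teo3} with the exponent $p$ replaced by $p_{\ast}$. The cone property on $\partial\Omega$ is assumed throughout the section, and $p_{\ast}^{-}>N\geq 2>1$, so the hypotheses are met; the assumption $1\leq q(x)<(p_{\ast})^{\ast}(x)$ for all $x\in\overline{\Omega}$ is exactly the growth restriction demanded by Theorem \ref{teo3}, which therefore delivers the compact embedding
\begin{equation*}
W^{1,p_{\ast}(.)}(\Omega)\hookrightarrow\hookrightarrow L^{q(.)}(\Omega).
\end{equation*}

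Finally I would observe that the composition of a bounded linear map with a compact one is compact, so composing the two embeddings above gives the claimed compact inclusion $W_{\omega_{1},\omega_{2}}^{1,p(.)}(\Omega)\hookrightarrow\hookrightarrow L^{q(.)}(\Omega)$.

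There is no real obstacle here; the statement is essentially a transitivity remark. The only small point worth checking is that Theorem \ref{teo3} is legitimately applicable to the exponent $p_{\ast}$, i.e.\ that $p_{\ast}\in C_{+}(\overline{\Omega})$ and that $(p_{\ast})^{\ast}(x)$ is well-defined; both follow at once from $N<p_{\ast}^{-}$, which forces $p_{\ast}(x)>N$ everywhere and hence $(p_{\ast})^{\ast}(x)=+\infty$, so the growth condition $q(x)<(p_{\ast})^{\ast}(x)$ is automatic on any bounded continuous $q$.
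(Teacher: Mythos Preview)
Your proof is correct and follows exactly the same route as the paper: invoke Theorem~\ref{teo1} for the continuous embedding $W_{\omega_{1},\omega_{2}}^{1,p(.)}(\Omega)\hookrightarrow W^{1,p_{\ast}(.)}(\Omega)$, then Theorem~\ref{teo3} for the compact embedding $W^{1,p_{\ast}(.)}(\Omega)\hookrightarrow L^{q(.)}(\Omega)$, and compose. Your additional remark that $N<p_{\ast}^{-}$ forces $(p_{\ast})^{\ast}(x)=+\infty$ is a nice extra sanity check that the paper omits.
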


\begin{proof}
By Theorem \ref{teo1} and Theorem \ref{teo3}, we have the continuous
embedding $W_{\omega _{1},\omega _{2}}^{1,p(.)}\left( \Omega \right)
\hookrightarrow W^{1,p_{\ast }(.)}\left( \Omega \right) $ and the compact
embedding $W^{1,p_{\ast }(.)}\left( \Omega \right) \hookrightarrow
L^{q(.)}(\Omega ).$ Thus it is easy to see that the compact embedding $%
W_{\omega _{1},\omega _{2}}^{1,p(.)}\left( \Omega \right) \hookrightarrow
L^{q(.)}(\Omega )$ is valid.
\end{proof}

If we apply the technique in \cite[Theorem 2.1]{deng}, then we prove the
following theorem similarly. Moreover, due to this theorem we can find out
the existence of weak solutions of the problem (\ref{P}).

\begin{theorem}
Let $\beta \in L^{\infty }\left( \partial \Omega \right) $ such that $\beta
^{-}=\underset{x\in \partial \Omega }{\inf }\beta (x)>0$. Then, the norm $%
\left\Vert u\right\Vert _{\beta (x)}$ is defined by%
\begin{equation*}
\left\Vert u\right\Vert _{\beta (x)}=\inf \left\{ \tau
>0:\int\limits_{\Omega }\omega _{1}(x)\left\vert \frac{\nabla u(x)}{\tau }%
\right\vert ^{p(x)}dx+\int\limits_{\partial \Omega }\beta (x)\left\vert 
\frac{u(x)}{\tau }\right\vert ^{p(x)}d\sigma \leq 1\right\}
\end{equation*}%
for any $u\in W_{\omega _{1},\omega _{2}}^{1,p(.)}\left( \Omega \right) $.
Moreover, $\left\Vert .\right\Vert _{\beta (x)}$ and $\left\Vert
.\right\Vert _{\omega _{1},\omega _{2}}^{1,p(.)}$ are equivalent on $%
W_{\omega _{1},\omega _{2}}^{1,p(.)}\left( \Omega \right) $.
\end{theorem}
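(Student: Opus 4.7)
My plan is to follow the strategy of \cite[Theorem 2.1]{deng}, adapted to the weighted setting by using the compact trace embedding built up in Corollary~6. I would introduce the modular
\begin{equation*}
\varrho_{\beta }(u)=\int_{\Omega }\omega _{1}(x)\left\vert \nabla u(x)\right\vert ^{p(x)}dx+\int_{\partial \Omega }\beta (x)\left\vert u(x)\right\vert ^{p(x)}d\sigma ,
\end{equation*}
whose Luxemburg norm is precisely $\left\Vert u\right\Vert _{\beta (x)}$. The first step is to verify that $\left\Vert \cdot \right\Vert _{\beta (x)}$ really is a norm on $W_{\omega _{1},\omega _{2}}^{1,p(.)}(\Omega )$. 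Positive homogeneity and the triangle inequality are standard properties of Luxemburg functionals built from a convex modular; the only point that needs attention is definiteness. If $\left\Vert u\right\Vert _{\beta (x)}=0$, then $\varrho _{\beta }(u/\tau )\leq 1$ for every $\tau >0$, which forces $\int_{\Omega }\omega _{1}\left\vert \nabla u\right\vert ^{p(x)}dx=0$ and $\int_{\partial \Omega }\beta \left\vert u\right\vert ^{p(x)}d\sigma =0$. Since $\beta ^{-}>0$, the boundary integral vanishing gives $u=0$ on $\partial \Omega $, and the gradient part forces $u$ to be constant on each component of $\Omega $; combining the two yields $u\equiv 0$.

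Next I would prove the equivalence with $\left\Vert u\right\Vert _{\omega _{1},\omega _{2}}^{1,p(.)}=\left\Vert \nabla u\right\Vert _{p(.),\omega _{1}}+\left\Vert u\right\Vert _{p(.),\omega _{2}}$. For the estimate $\left\Vert u\right\Vert _{\beta (x)}\leq C\left\Vert u\right\Vert _{\omega _{1},\omega _{2}}^{1,p(.)}$, I would split the modular: the gradient part is controlled directly by $\varrho _{p(.),\omega _{1}}(\nabla u)$ and hence, via Proposition~1, by a power of $\left\Vert \nabla u\right\Vert _{p(.),\omega _{1}}$; the boundary part is controlled by $\left\Vert \beta \right\Vert _{\infty }\int_{\partial \Omega }\left\vert u\right\vert ^{p(x)}d\sigma $, which by the compact trace embedding $W_{\omega _{1},\omega _{2}}^{1,p(.)}(\Omega )\hookrightarrow L_{\omega _{1}}^{p(.)}(\partial \Omega )$ from Corollary~6 (or the unweighted version in Theorem~4) is bounded by a power of $\left\Vert u\right\Vert _{\omega _{1},\omega _{2}}^{1,p(.)}$. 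Rescaling $u$ by a suitable multiple of $\left\Vert u\right\Vert _{\omega _{1},\omega _{2}}^{1,p(.)}$ and using part~(i) of Proposition~1 then yields the desired inequality.

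The reverse estimate $\left\Vert u\right\Vert _{\omega _{1},\omega _{2}}^{1,p(.)}\leq C\left\Vert u\right\Vert _{\beta (x)}$ is the main obstacle, because the quantity $\left\Vert u\right\Vert _{p(.),\omega _{2}}$ is not visibly present in $\varrho _{\beta }$. I would argue by contradiction. Suppose no such $C$ exists; then there is $\{u_{n}\}\subset W_{\omega _{1},\omega _{2}}^{1,p(.)}(\Omega )$ with $\left\Vert u_{n}\right\Vert _{\omega _{1},\omega _{2}}^{1,p(.)}=1$ and $\left\Vert u_{n}\right\Vert _{\beta (x)}\to 0$. By the Proposition~1 correspondence between norm and modular, $\int_{\Omega }\omega _{1}\left\vert \nabla u_{n}\right\vert ^{p(x)}dx\to 0$ and $\int_{\partial \Omega }\beta \left\vert u_{n}\right\vert ^{p(x)}d\sigma \to 0$, so $\left\Vert \nabla u_{n}\right\Vert _{p(.),\omega _{1}}\to 0$ and (using $\beta ^{-}>0$) $\left\Vert u_{n}\right\Vert _{p(.),\partial \Omega }\to 0$. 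On the other hand, the compact embedding $W_{\omega _{1},\omega _{2}}^{1,p(.)}(\Omega )\hookrightarrow \hookrightarrow C(\overline{\Omega })$ of Theorem~1 gives a subsequence (still denoted $u_{n}$) converging uniformly to some $u\in C(\overline{\Omega })$. The gradient information forces $u$ to be constant on each component of $\Omega $, while the boundary information forces $u\equiv 0$ on $\partial \Omega $, hence $u\equiv 0$ on $\overline{\Omega }$. Then $\left\Vert u_{n}\right\Vert _{\infty }\to 0$, and since $\omega _{2}^{-1/(p(.)-1)}\in L_{loc}^{1}(\Omega )$ together with $\omega _{2}\in L_{loc}^{1}(\Omega )$ (implicit in the definition of the space), dominated convergence applied to $\int_{\Omega }\omega _{2}\left\vert u_{n}\right\vert ^{p(x)}dx$ yields $\left\Vert u_{n}\right\Vert _{p(.),\omega _{2}}\to 0$. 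Together with $\left\Vert \nabla u_{n}\right\Vert _{p(.),\omega _{1}}\to 0$ this gives $\left\Vert u_{n}\right\Vert _{\omega _{1},\omega _{2}}^{1,p(.)}\to 0$, contradicting $\left\Vert u_{n}\right\Vert _{\omega _{1},\omega _{2}}^{1,p(.)}=1$. The only delicate point I expect to have to address carefully is the dominated-convergence step: one needs a uniform integrable majorant for $\omega _{2}\left\vert u_{n}\right\vert ^{p(x)}$, which is provided by $\omega _{2}\left\Vert u_{n}\right\Vert _{\infty }^{p(x)}\leq \omega _{2}(1+c_{1})^{p^{+}}$ (using Corollary~2) on the set where $\omega _{2}$ is integrable, or alternatively by replacing dominated convergence with a direct Proposition~1 estimate after noting $\left\Vert u_{n}\right\Vert _{\infty }\to 0$.
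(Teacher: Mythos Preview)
Your proposal is correct and aligns with the paper's approach: the paper does not actually give a proof of this theorem, but simply states that one applies the technique of \cite[Theorem~2.1]{deng}, which is precisely the strategy you carry out in detail. Your adaptation to the double-weighted setting, using the compact embedding $W_{\omega_{1},\omega_{2}}^{1,p(.)}(\Omega)\hookrightarrow\hookrightarrow C(\overline{\Omega})$ from Theorem~\ref{teo1} for the contradiction argument, is a legitimate substitute for the compactness Deng uses in the unweighted case; note also that $\omega_{2}\in L^{1}(\Omega)$ is indeed implicit (the paper later takes $u_{1}\equiv\gamma\in W_{\omega_{1},\omega_{2}}^{1,p(.)}(\Omega)$), so your dominated-convergence step goes through.
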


\begin{proposition}
\label{pro2}(see \cite{deng}) Let $I_{\beta (x)}(u)=\int\limits_{\Omega
}\omega _{1}(x)\left\vert \nabla u(x)\right\vert
^{p(x)}dx+\int\limits_{\partial \Omega }\beta (x)\left\vert u(x)\right\vert
^{p(x)}d\sigma $ with $\beta ^{-}>0$. For any $u,u_{k}\in W_{\omega
_{1},\omega _{2}}^{1,p(.)}\left( \Omega \right) $ $\left( k=1,2,...\right) $%
, we have

\begin{enumerate}
\item[\textit{(i)}] $\left\Vert u\right\Vert _{\beta (x)}^{p^{-}}\leq
I_{\beta (x)}(u)\leq \left\Vert u\right\Vert _{\beta (x)}^{p^{+}}$ with $%
\left\Vert u\right\Vert _{\beta (x)}\geq 1,$

\item[\textit{(ii)}] $\left\Vert u\right\Vert _{\beta (x)}^{p^{+}}\leq
I_{\beta (x)}(u)\leq \left\Vert u\right\Vert _{\beta (x)}^{p^{-}}$ with $%
\left\Vert u\right\Vert _{\beta (x)}\leq 1,$

\item[\textit{(iii)}] $\min \left\{ \left\Vert u\right\Vert _{\beta
(x)}^{p^{-}},\left\Vert u\right\Vert _{\beta (x)}^{p^{+}}\right\} \leq
I_{\beta (x)}(u)\leq \max \left\{ \left\Vert u\right\Vert _{\beta
(x)}^{p^{-}},\left\Vert u\right\Vert _{\beta (x)}^{p^{+}}\right\} ,$

\item[\textit{(iv)}] $\left\Vert u-u_{k}\right\Vert _{\beta (x)}\rightarrow
0 $ if and only if $I_{\beta (x)}(u-u_{k})\rightarrow 0$ (as $k\rightarrow
\infty $),

\item[\textit{(v)}] $\left\Vert u_{k}\right\Vert _{\beta (x)}\rightarrow
\infty $ if and only if $I_{\beta (x)}(u_{k})\rightarrow \infty $ (as $%
k\rightarrow \infty $).
\end{enumerate}
\end{proposition}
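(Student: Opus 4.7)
The strategy is to recognize that $\|\cdot\|_{\beta(x)}$ is exactly the Luxemburg norm generated by the convex modular $I_{\beta(x)}$, so the argument runs in parallel with the one yielding Proposition 1 for $\|\cdot\|_{p(\cdot),\omega}$; the only novelty is that the modular now combines a bulk integral against $\omega_1\,dx$ and a boundary integral against $\beta\,d\sigma$. Parts (i)--(iii) will be consequences of the identity $I_{\beta(x)}(u/\|u\|_{\beta(x)}) = 1$, valid for every $u \neq 0$; parts (iv) and (v) are then routine corollaries of (iii).

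First I would establish that identity. Write $\tau_0 = \|u\|_{\beta(x)}$. Choosing $\tau_n \downarrow \tau_0$ with $I_{\beta(x)}(u/\tau_n)\leq 1$, Fatou's lemma applied separately to the integrands $\omega_1|\nabla u/\tau_n|^{p(x)}$ on $\Omega$ and $\beta|u/\tau_n|^{p(x)}$ on $\partial\Omega$ yields $I_{\beta(x)}(u/\tau_0)\leq 1$. For the reverse, pick any $\tau < \tau_0$; by the definition of the infimum $I_{\beta(x)}(u/\tau) > 1$, and monotone convergence on both integrals, since $\tau\mapsto \tau^{-p(x)}$ is monotone as $\tau\uparrow\tau_0$, forces $I_{\beta(x)}(u/\tau_0)\geq 1$.

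With the identity in hand, I would derive (i) from the pointwise scaling
\[
\tau_0^{-p^+} \, I_{\beta(x)}(u) \;\leq\; I_{\beta(x)}(u/\tau_0) \;\leq\; \tau_0^{-p^-} \, I_{\beta(x)}(u) \qquad (\tau_0 \geq 1),
\]
which is obtained term-by-term from $\tau_0^{-p^+} \leq \tau_0^{-p(x)} \leq \tau_0^{-p^-}$ integrated against $\omega_1|\nabla u|^{p(x)}\,dx + \beta|u|^{p(x)}\,d\sigma$; substituting the middle term $=1$ and rearranging gives $\tau_0^{p^-}\leq I_{\beta(x)}(u)\leq \tau_0^{p^+}$. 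Part (ii) is symmetric: for $\tau_0\leq 1$ the chain $\tau_0^{-p^-}\leq \tau_0^{-p(x)}\leq \tau_0^{-p^+}$ runs the other way, and (iii) is the unified statement of (i) and (ii). Finally, (iv) follows by applying (iii) to $u-u_k$ (once the norms enter the regime $\leq 1$, $\|u-u_k\|_{\beta(x)}^{p^+}\leq I_{\beta(x)}(u-u_k)\leq \|u-u_k\|_{\beta(x)}^{p^-}$ gives both implications), and (v) is analogous in the regime $\|u_k\|_{\beta(x)}\geq 1$.

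The only real subtlety is ensuring the achievement of the infimum $I_{\beta(x)}(u/\tau_0)=1$ and the Fatou/monotone convergence manipulations go through equally on the surface measure $d\sigma$; this is exactly where one uses $\beta\in L^\infty(\partial\Omega)$ together with $\beta^->0$, which makes the boundary term a finite positive-weight integral behaving for these purposes exactly like the bulk term. Beyond this, the argument is essentially bookkeeping already performed for Proposition 1, with the sum of two integrals in place of one.
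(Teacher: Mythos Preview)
The paper does not supply its own proof of this proposition; it merely records the statement and defers to the reference \cite{deng}. Your argument is the standard Luxemburg-norm/modular computation and is correct: the key identity $I_{\beta(x)}\bigl(u/\|u\|_{\beta(x)}\bigr)=1$ together with the pointwise bounds $\tau_0^{-p^{+}}\le\tau_0^{-p(x)}\le\tau_0^{-p^{-}}$ (or the reverse, according to whether $\tau_0\ge1$ or $\tau_0\le1$) yields (i)--(iii), and (iv)--(v) follow at once. One small remark: in the step ``$I_{\beta(x)}(u/\tau)>1$ for $\tau<\tau_0$ forces $I_{\beta(x)}(u/\tau_0)\ge1$'' you invoke monotone convergence on a \emph{decreasing} family of integrands, which strictly speaking needs an integrable dominator; this is harmless here because you have already shown $I_{\beta(x)}(u/\tau_0)\le1<\infty$, so any fixed $\tau\in(\tau_0/2,\tau_0)$ gives a dominating function via $I_{\beta(x)}(u/\tau)\le(\tau_0/\tau)^{p^{+}}I_{\beta(x)}(u/\tau_0)<\infty$. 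With that caveat noted, nothing is missing.
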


The following Proposition can be proved by Proposition 2.2 in \cite{be}.

\begin{proposition}
\label{pro3}Let us define the functional $L_{\beta (x)}:W_{\omega
_{1},\omega _{2}}^{1,p(.)}\left( \Omega \right) \rightarrow 
\mathbb{R}
$ by%
\begin{equation*}
L_{\beta (x)}\left( u\right) =\dint\limits_{\Omega }\frac{\omega _{1}(x)}{%
p\left( x\right) }\left\vert \nabla u\right\vert ^{p\left( x\right)
}dx+\int\limits_{\partial \Omega }\frac{\beta (x)}{p(x)}\left\vert
u(x)\right\vert ^{p(x)}d\sigma
\end{equation*}%
for all $u\in W_{\omega _{1},\omega _{2}}^{1,p(.)}\left( \Omega \right) $.
Then we obtain $L_{\beta (x)}\in C^{1}\left( W_{\omega _{1},\omega
_{2}}^{1,p(.)}\left( \Omega \right) ,%
\mathbb{R}
\right) $ and 
\begin{equation*}
L_{\beta (x)}^{\prime }\left( u\right) (v)=<L_{\beta (x)}^{\prime }\left(
u\right) ,v>=\dint\limits_{\Omega }\omega _{1}(x)\left\vert \nabla
u\right\vert ^{p\left( x\right) -2}\nabla u\nabla vdx+\int\limits_{\partial
\Omega }\beta (x)\left\vert u(x)\right\vert ^{p(x)-2}uvd\sigma
\end{equation*}%
for any $u,v\in W_{\omega _{1},\omega _{2}}^{1,p(.)}\left( \Omega \right) $.
In addition, we have the following properties

\begin{enumerate}
\item[\textit{(i)}] $L_{\beta (x)}^{\prime }:W_{\omega _{1},\omega
_{2}}^{1,p(.)}\left( \Omega \right) \longrightarrow W_{\omega _{1}^{\ast
},\omega _{2}^{\ast }}^{-1,p^{\prime }(.)}\left( \Omega \right) $ is
continuous, bounded and strictly monotone operator,

\item[\textit{(ii)}] $L_{\beta (x)}^{\prime }:W_{\omega _{1},\omega
_{2}}^{1,p(.)}\left( \Omega \right) \longrightarrow W_{\omega _{1}^{\ast
},\omega _{2}^{\ast }}^{-1,p^{\prime }(.)}\left( \Omega \right) $ is a
mapping of type $\left( S_{+}\right) ,$ i.e., if $u_{n}\rightharpoonup u$ in 
$W_{\omega _{1},\omega _{2}}^{1,p(.)}\left( \Omega \right) $ and $%
\limsup_{n\longrightarrow \infty }L_{\beta (x)}^{\prime }\left( u_{n}\right)
(u_{n}-u)\leq 0$, then $u_{n}\longrightarrow u$ in $W_{\omega _{1},\omega
_{2}}^{1,p(.)}\left( \Omega \right) $

\item[\textit{(iii)}] $L_{\beta (x)}^{\prime }:W_{\omega _{1},\omega
_{2}}^{1,p(.)}\left( \Omega \right) \longrightarrow W_{\omega _{1}^{\ast
},\omega _{2}^{\ast }}^{-1,p^{\prime }(.)}\left( \Omega \right) $ is a
homeomorphism.
\end{enumerate}
\end{proposition}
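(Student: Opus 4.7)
The plan is to split the functional as $L_{\beta(x)}=\Phi+\Psi$, where
\[
\Phi(u)=\int_{\Omega}\frac{\omega_{1}(x)}{p(x)}|\nabla u|^{p(x)}\,dx,\qquad \Psi(u)=\int_{\partial\Omega}\frac{\beta(x)}{p(x)}|u|^{p(x)}\,d\sigma,
\]
and to treat the two pieces separately. For $\Phi$, the usual Nemytskii argument on $L^{p(.)}_{\omega_{1}}(\Omega)$ shows Gâteaux differentiability with the claimed gradient; boundedness of the Nemytskii operator $\xi\mapsto |\xi|^{p(x)-2}\xi$ from $L^{p(.)}_{\omega_{1}}$ to $L^{p'(.)}_{\omega_{1}^{*}}$ (thanks to the weighted duality recalled in the preliminaries), together with Hölder's inequality, gives continuity of the derivative, hence $\Phi\in C^{1}$. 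For $\Psi$, I would first pass through the compact trace embedding of Corollary following Theorem \ref{teo2}, which places $W^{1,p(.)}_{\omega_{1},\omega_{2}}(\Omega)$ compactly inside $L^{p(.)}_{\beta}(\partial\Omega)$ (noting $\beta\in L^{\infty}(\partial\Omega)$ with $\beta^{-}>0$), and then repeat the Nemytskii/Hölder argument on the boundary to obtain the stated boundary integral as the derivative. Summing yields the formula for $L'_{\beta(x)}(u)(v)$.

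For (i), continuity and boundedness of $L'_{\beta(x)}$ follow from the two Nemytskii arguments above and Proposition 1. Strict monotonicity rests on the pointwise vector inequalities
\[
(|\xi|^{p-2}\xi-|\eta|^{p-2}\eta)\cdot(\xi-\eta)>0\quad\text{if } p\geq 2,\xi\neq\eta,
\]
and the analogous weighted inequality for $1<p<2$; integrating the volume version weighted by $\omega_{1}$ and the boundary version weighted by $\beta$ against $\xi=\nabla u$, $\eta=\nabla v$ (resp.\ $u,v$) gives $\langle L'_{\beta(x)}(u)-L'_{\beta(x)}(v),u-v\rangle>0$ whenever $u\neq v$, since equality in both inequalities forces $\nabla(u-v)\equiv 0$ and $(u-v)|_{\partial\Omega}\equiv 0$, hence $u=v$.

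For (ii), assume $u_{n}\rightharpoonup u$ in $W^{1,p(.)}_{\omega_{1},\omega_{2}}(\Omega)$ and $\limsup_{n\to\infty}\langle L'_{\beta(x)}(u_{n}),u_{n}-u\rangle\leq 0$. Since $L'_{\beta(x)}(u)\in W^{-1,p'(.)}_{\omega_{1}^{*},\omega_{2}^{*}}(\Omega)$, weak convergence gives $\langle L'_{\beta(x)}(u),u_{n}-u\rangle\to 0$, so $\limsup_{n\to\infty}\langle L'_{\beta(x)}(u_{n})-L'_{\beta(x)}(u),u_{n}-u\rangle\leq 0$. Using the monotonicity inequalities above, this forces
\[
\int_{\Omega}\omega_{1}(x)\bigl(|\nabla u_{n}|^{p(x)-2}\nabla u_{n}-|\nabla u|^{p(x)-2}\nabla u\bigr)\cdot\nabla(u_{n}-u)\,dx\longrightarrow 0,
\]
and the analogous boundary quantity vanishes as well. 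Standard manipulations (or the elementary inequalities specialized to $p\geq 2$ and $1<p<2$) then yield $I_{\beta(x)}(u_{n}-u)\to 0$, and Proposition 2(iv) (i.e., equivalence of modular and norm convergence) combined with the equivalence of $\|\cdot\|_{\beta(x)}$ and $\|\cdot\|^{1,p(.)}_{\omega_{1},\omega_{2}}$ yields $u_{n}\to u$ strongly.

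For (iii), strict monotonicity forces injectivity. Coercivity of $L'_{\beta(x)}$ follows from $\langle L'_{\beta(x)}(u),u\rangle=I_{\beta(x)}(u)\geq \|u\|_{\beta(x)}^{p^{-}}$ for $\|u\|_{\beta(x)}\geq 1$ (Proposition 2(i)), so $\langle L'_{\beta(x)}(u),u\rangle/\|u\|_{\beta(x)}\to\infty$. Surjectivity then follows from the Browder--Minty theorem for continuous, bounded, coercive monotone operators on a reflexive Banach space. Thus $L'_{\beta(x)}$ is a continuous bijection, and continuity of the inverse follows from the $(S_{+})$ property together with the boundedness of $(L'_{\beta(x)})^{-1}$ on bounded sets (a routine consequence of coercivity), giving the homeomorphism. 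The main obstacle I expect is keeping track of the boundary contribution throughout: one must ensure that the Nemytskii operator, the duality pairing, and the monotonicity manipulations all interact correctly with the weighted trace theory provided by Theorem \ref{teo2}, rather than the usual unweighted setting.
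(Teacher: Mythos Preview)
Your sketch is correct and follows the standard route for results of this type: split into the interior gradient part and the boundary part, use Nemytskii-operator estimates and weighted H\"older duality for $C^{1}$-regularity and boundedness, the pointwise $p$-Laplacian monotonicity inequalities for strict monotonicity and the $(S_{+})$ property, and Browder--Minty plus $(S_{+})$ for the homeomorphism. The paper, however, does not give any argument of its own for this proposition; it simply states that the result ``can be proved by Proposition~2.2 in \cite{be}'' and moves on. So there is nothing in the paper to compare against beyond that citation, and your proposal is in fact considerably more detailed than what the authors provide. The methodology you outline is precisely the one used in the Ge--Zhou reference the paper invokes (adapted to the weighted setting), so in substance you are reproducing the cited proof rather than offering a genuinely different route. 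One small remark: in your strict-monotonicity step you correctly observe that $\nabla(u-v)\equiv 0$ together with $(u-v)|_{\partial\Omega}\equiv 0$ forces $u=v$; this uses connectedness of $\Omega$, which is implicit in the smooth-bounded-domain hypothesis but worth noting.
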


\begin{theorem}
\label{teo5}(see \cite{ri}) Let $X$ be a separable and reflexive real Banach
space; $\Phi :X\rightarrow 
\mathbb{R}
$ a continuously G\^{a}teaux differentiable and sequentially weakly lower
semicontinuous functional whose G\^{a}teaux derivative admits a continuous
inverse on $X^{\ast }$; $\Psi :X\rightarrow 
\mathbb{R}
$ a continuously G\^{a}teaux differentiable functional whose G\^{a}teaux
derivative is compact. Moreover, assume that

\begin{enumerate}
\item[\textit{(i)}] $\underset{\left\Vert u\right\Vert \rightarrow \infty }{%
\lim }\left( \Phi (u)+\lambda \Psi (u)\right) =\infty $ for all $\lambda >0,$

\item[\textit{(ii)}] there are $r\in 
\mathbb{R}
$ and $u_{0},u_{1}\in X$ such that $\Phi (u_{0})<r<\Phi (u_{1}),$

\item[\textit{(iii)}] $\underset{u\in \Phi ^{-1}\left( \left( -\infty ,r%
\right] \right) }{\inf }\Psi (u)>\frac{\left( \Phi (u_{1})-r\right) \Psi
(u_{0})+\left( r-\Phi (u_{0})\right) \Psi (u_{1})}{\Phi (u_{1})-\Phi (u_{0})}%
.$
\end{enumerate}

Then there exist an open interval $\Lambda \subset \left( 0,\infty \right) $
and a positive constant $\rho >0$ such that for any $\lambda \in \Lambda $
the equation $\Phi ^{\prime }\left( u\right) +\lambda \Psi ^{\prime }\left(
u\right) =0$ has at least three solutions in $X$ whose norms are less than $%
\rho $.
\end{theorem}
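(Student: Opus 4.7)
The plan is to deduce this as an application of Ricceri's general minimax framework to the parametric family $I_\lambda := \Phi + \lambda\Psi$ on the reflexive Banach space $X$. First I would observe that hypothesis (i), together with the sequential weak lower semicontinuity of $\Phi$ and the weak continuity of $\Psi$ (which follows from the compactness of $\Psi'$ on a reflexive space), guarantees that $I_\lambda$ is coercive and weakly sequentially lower semicontinuous for every $\lambda > 0$, hence attains a global minimum on $X$.

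Next I would introduce the concave upper semicontinuous function $\varphi(\lambda) := \inf_{u \in X}(\Phi(u) + \lambda\Psi(u))$ and compare it, at appropriate values of $\lambda$, with the affine interpolant through the points $(\Phi(u_0), \Psi(u_0))$ and $(\Phi(u_1), \Psi(u_1))$. The right-hand side of (iii) is precisely the $\Psi$-coordinate of the point on that affine segment whose $\Phi$-coordinate equals $r$, so (iii) says the infimum of $\Psi$ on $\Phi^{-1}((-\infty, r])$ is strictly greater than this interpolated value. A Legendre--Fenchel separation argument then produces an open interval $\Lambda \subset (0, \infty)$ such that, for every $\lambda \in \Lambda$, the functional $I_\lambda$ admits at least two distinct local minimizers $u_\lambda^{(1)}, u_\lambda^{(2)}$, one with $\Phi$-value below $r$ and one with $\Phi$-value above $r$.

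With two local minima in hand, I would obtain the third critical point from the mountain pass theorem applied to $I_\lambda$ between $u_\lambda^{(1)}$ and $u_\lambda^{(2)}$. The Palais--Smale condition is verified as follows: any PS sequence is bounded by (i) and admits a weakly convergent subsequence, and since $\Psi'$ is compact while $(\Phi')^{-1}$ is continuous on $X^{\ast}$, weak convergence of the sequence together with convergence of $I_\lambda'$ to zero upgrades to norm convergence. The uniform bound $\rho$ on the norms of all three solutions as $\lambda$ varies in the compact closure of $\Lambda$ follows from the coercivity estimate in (i), which is uniform on compact parameter subsets, together with an a priori control of the mountain-pass level in terms of $\Phi(u_0), \Phi(u_1), \Psi(u_0), \Psi(u_1)$.

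The main obstacle is the passage from the single inequality (iii) to two genuinely distinct local minimizers separated by the sublevel set $\Phi^{-1}((-\infty, r])$. This is the core topological content of Ricceri's principle and is typically handled by an $\inf$--$\sup$ swap between the variables $u \in X$ and $\lambda \in (0, \infty)$ via Ricceri's minimax inequality; verifying its hypotheses in the abstract setting above is the delicate step, whereas the mountain-pass argument and the Palais--Smale verification, though essential, become routine once the two local minima are constructed.
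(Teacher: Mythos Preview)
The paper does not prove this statement at all: Theorem~\ref{teo5} is quoted verbatim from Ricceri's paper \cite{ri} (note the ``see \cite{ri}'' attribution) and is used only as a black-box tool in the proof of the main result. There is therefore no ``paper's own proof'' to compare your proposal against.

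Your sketch is a reasonable high-level outline of how Ricceri's three-critical-points theorem is actually established in the literature---coercivity and weak lower semicontinuity for existence of minima, a minimax/separation argument to produce two distinct local minimizers from the numerical inequality (iii), and a mountain-pass step together with a Palais--Smale verification for the third critical point---and you correctly identify the delicate point (passing from the single scalar inequality to two genuinely separated local minima via Ricceri's $\inf$--$\sup$ swap). But none of this appears in the present paper, whose contribution is to \emph{verify} the hypotheses (i)--(iii) of Theorem~\ref{teo5} for the specific functionals $L_{\beta(x)}$ and $\Psi$ associated with the Robin problem. If your goal is to match the paper, you should instead be checking those hypotheses for the concrete problem, not reproving the abstract principle.
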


\section{The Main Result}

Throughout the paper we assume that the following conditions:

\begin{enumerate}
\item[\textit{(I)}] $\left\vert f(x,t)\right\vert \leq h(x)+c_{2}\left\vert
t\right\vert ^{s(x)-1}$ for any $(x,t)\in \Omega \times 
\mathbb{R}
,$ $c_{2}>0$, where the function $f:$ $\Omega \times 
\mathbb{R}
\rightarrow 
\mathbb{R}
$ is a Carath\'{e}odory function, $h(x)\in L^{\frac{s(x)}{s(x)-1}}(\Omega )$%
, $h(x)\geq 0$ and $s(x)\in C_{+}\left( \Omega \right) $, $1<s^{-}=\underset{%
x\in \overline{\Omega }}{\inf }s(x)\leq s^{+}=\underset{x\in \overline{%
\Omega }}{\sup }s(x)<p^{-}$ with $s(x)<\left( p_{\ast }\right) ^{\ast }(x)$
for all $x\in \overline{\Omega }.$

\item[\textit{(II)}] \textit{(i) }$f(x,t)<0$ for all $(x,t)\in \Omega \times 
\mathbb{R}
,$ and $\left\vert t\right\vert \in \left( 0,1\right) $,

\textit{(ii) }$f(x,t)\geq k>0,$ when $\left\vert t\right\vert \in \left(
t_{0},\infty \right) $, $t_{0}>1$.
\end{enumerate}

Let $u\in W_{\omega _{1},\omega _{2}}^{1,p(.)}\left( \Omega \right) .$ Then
the functional $\Phi _{\lambda }\left( u\right) $ is defined by 
\begin{equation*}
\Phi _{\lambda }\left( u\right) =L_{\beta (x)}(u)+\lambda \Psi (u),
\end{equation*}

where $\Psi (u)=-\int\limits_{\Omega }F(x,u)dx$ and $F(x,t)=\int%
\limits_{0}^{t}f(x,y)dy.$ Moreover, $\Phi _{\lambda }\left( u\right) $ is
called energy functional of the problem (\ref{P}).

It is obvious that $\left( L_{\beta (x)}^{\prime }\right) ^{-1}:W_{\omega
_{1}^{\ast },\omega _{2}^{\ast }}^{-1,q(.)}\left( \Omega \right)
\longrightarrow W_{\omega _{1},\omega _{2}}^{1,p(.)}\left( \Omega \right) $
exists and continuous, because $L_{\beta (x)}^{\prime }:W_{\omega
_{1},\omega _{2}}^{1,p(.)}\left( \Omega \right) \longrightarrow W_{\omega
_{1}^{\ast },\omega _{2}^{\ast }}^{-1,q(.)}\left( \Omega \right) $ is a
homeomorphism by Proposition \ref{pro3}. Moreover, due to the assumption $%
(I) $ it is well known that $\Psi \in C^{1}\left( W_{\omega _{1},\omega
_{2}}^{1,p(.)}\left( \Omega \right) ,%
\mathbb{R}
\right) $ with the derivatives given by $\left\langle \Psi ^{\prime }\left(
u\right) ,v\right\rangle =-\int\limits_{\Omega }f(x,u)vdx$ for any $u,v\in
W_{\omega _{1},\omega _{2}}^{1,p(.)}\left( \Omega \right) $, and $\Psi
^{\prime }:W_{\omega _{1},\omega _{2}}^{1,p(.)}\left( \Omega \right)
\longrightarrow W_{\omega _{1}^{\ast },\omega _{2}^{\ast }}^{-1,q(.)}\left(
\Omega \right) $ is completely continuous by \cite[Theorem 2.9]{ala}.
Therefore, $\Psi ^{\prime }:W_{\omega _{1},\omega _{2}}^{1,p(.)}\left(
\Omega \right) \longrightarrow W_{\omega _{1}^{\ast },\omega _{2}^{\ast
}}^{-1,q(.)}\left( \Omega \right) $ is compact.

\begin{definition}
We call that $u\in W_{\omega _{1},\omega _{2}}^{1,p(.)}\left( \Omega \right) 
$ is a weak solution of the problem (\ref{P}) if%
\begin{equation*}
\dint\limits_{\Omega }\omega _{1}(x)\left\vert \nabla u\right\vert ^{p\left(
x\right) -2}\nabla u\nabla vdx+\int\limits_{\partial \Omega }\beta
(x)\left\vert u(x)\right\vert ^{p(x)-2}uvd\sigma -\lambda
\tint\limits_{\Omega }\omega _{2}(x)f(x,u)vdx=0
\end{equation*}%
for all $v\in W_{\omega _{1},\omega _{2}}^{1,p(.)}\left( \Omega \right) .$
We point out that if $\lambda \in 
\mathbb{R}
$ is an eigenvalue of the problem (\ref{P}), then the corresponding $u\in
W_{\omega _{1},\omega _{2}}^{1,p(.)}\left( \Omega \right) -\left\{ 0\right\} 
$ is a weak solution of (\ref{P}).
\end{definition}

\begin{theorem}
There exist an open interval $\Lambda \subset \left( 0,\infty \right) $ and
a positive constant $\rho >0$ such that for any $\lambda \in \Lambda $, the
problem (\ref{P}) has at least three solutions in $W_{\omega _{1},\omega
_{2}}^{1,p(.)}\left( \Omega \right) $ whose norms are less than $\rho $.
\end{theorem}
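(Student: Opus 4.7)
The plan is to apply Ricceri's three-critical-points theorem (Theorem~\ref{teo5}) to the energy functional $\Phi_\lambda=\Phi+\lambda\Psi$ on the Banach space $X=W_{\omega_1,\omega_2}^{1,p(.)}(\Omega)$, where $\Phi=L_{\beta(x)}$ and $\Psi(u)=-\int_{\Omega}F(x,u)\,dx$. The space $X$ is separable and reflexive; Proposition~\ref{pro3} provides that $\Phi\in C^{1}(X,\mathbb{R})$ is sequentially weakly lower semicontinuous and $\Phi'$ is a homeomorphism (so it has a continuous inverse on $X^{*}$); and, as noted in the discussion preceding the theorem, assumption (I) combined with the compact embedding in Corollary~\ref{cor4} yields $\Psi\in C^{1}(X,\mathbb{R})$ with compact derivative. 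Under these identifications the critical points of $\Phi_\lambda$ coincide with the weak solutions of (\ref{P}).

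For the coercivity condition (i) I would use Proposition~\ref{pro2}(i) to get $\Phi(u)\ge\frac{1}{p^{+}}\|u\|_{\beta(x)}^{p^{-}}$ when $\|u\|_{\beta(x)}\ge 1$, together with the bound $|F(x,t)|\le h(x)|t|+\tfrac{c_{2}}{s^{-}}|t|^{s(x)}$ coming from (I); Corollary~\ref{cor1} (giving $\|u\|_{\infty}\le c_{1}\|u\|_{\omega_{1},\omega_{2}}^{1,p(.)}$) and the equivalence of $\|\cdot\|_{\beta(x)}$ with the natural norm then produce $|\Psi(u)|\le C_{1}\|u\|+C_{2}\|u\|^{s^{+}}$ for $\|u\|$ large. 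Because $s^{+}<p^{-}$, the $\Phi$-term dominates $\lambda|\Psi|$ for every $\lambda>0$, which gives (i). For (ii) I would take $u_{0}\equiv 0$ (so $\Phi(u_{0})=\Psi(u_{0})=0$) and $u_{1}=M\phi$, where $M>t_{0}$ is a large constant and $\phi\in C_{c}^{\infty}(\Omega)$ is a cut-off satisfying $\phi\equiv 1$ on a generous interior set $A\subset\Omega$. Hypothesis (II)(ii) gives $F(x,M)\ge k(M-t_{0})-C_{0}$ on $A$, so $\Psi(u_{1})\le -k(M-t_{0})|A|+C$ is as negative as one wishes for $M$ large, while $\Phi(u_{1})$ remains controlled by $|\nabla\phi|$; any $r\in(0,\Phi(u_{1}))$ then satisfies (ii).

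The main obstacle is verifying (iii). With $u_{0}=0$ the right-hand side simplifies to $r\Psi(u_{1})/\Phi(u_{1})$, which is negative. On $\Phi^{-1}((-\infty,r])$ with $r$ small, Proposition~\ref{pro2}(ii) forces $\|u\|_{\beta(x)}\le(p^{+}r)^{1/p^{+}}$, and then (I) together with Corollary~\ref{cor1} yields $|\Psi(u)|\le C\bigl(r^{1/p^{+}}+r^{s^{-}/p^{+}}\bigr)$, so $\inf_{\Phi(u)\le r}\Psi(u)\ge -C\bigl(r^{1/p^{+}}+r^{s^{-}/p^{+}}\bigr)$. The delicate step is the simultaneous choice of $M$ and $r$ ensuring
\[
C\bigl(r^{1/p^{+}}+r^{s^{-}/p^{+}}\bigr)\;<\;\frac{r|\Psi(u_{1})|}{\Phi(u_{1})},
\]
which is precisely where the quantitative interplay between the subcritical growth in (I) (via $s^{+}<p^{-}$) and the sign/size condition (II)(ii) is used: the former controls $\Psi$ on small sublevel sets of $\Phi$, while the latter forces $|\Psi(u_{1})|/\Phi(u_{1})$ to be bounded below by a quantity sufficient to defeat the sublevel estimate once $M$ and the support of $\phi$ are chosen appropriately. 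Once (iii) is established, Theorem~\ref{teo5} produces an open interval $\Lambda\subset(0,\infty)$ and a radius $\rho>0$ such that, for every $\lambda\in\Lambda$, the functional $\Phi_\lambda$ has at least three critical points of norm less than $\rho$; by the definition of weak solution these are three solutions of (\ref{P}).
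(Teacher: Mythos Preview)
Your overall strategy (apply Theorem~\ref{teo5} with $\Phi=L_{\beta(x)}$, $\Psi(u)=-\int_\Omega F(x,u)\,dx$) and your verification of the structural hypotheses and of coercivity (i) are fine and match the paper. The choice of $u_1$ is different---the paper simply takes the constant function $u_1\equiv\gamma$ (so $\nabla u_1=0$ and $\Phi(u_1)$ reduces to the boundary integral), whereas you use a cut-off $M\phi$; either choice can be made to work for (ii).

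The genuine gap is in (iii). Your proposed ``quantitative balancing'' does not close: from $\Phi(u)\le r$ you correctly obtain $\|u\|_\infty\le c_1(p^{+}r)^{1/p^{+}}$, and then (I) gives only
\[
\bigl|\Psi(u)\bigr|\;\le\;C\bigl(r^{1/p^{+}}+r^{s^{-}/p^{+}}\bigr).
\]
Since $1/p^{+}<1$ and $s^{-}/p^{+}<1$, this bound decays \emph{more slowly} than $r$ as $r\to 0$, so it cannot be beaten by a term of the form $r\cdot|\Psi(u_1)|/\Phi(u_1)$ for small $r$. Making $M$ large does not help either: with $u_1=M\phi$ one has $|\Psi(u_1)|\sim M$ while $\Phi(u_1)\gtrsim M^{p^{-}}$, so the ratio $|\Psi(u_1)|/\Phi(u_1)\to 0$; the same happens for the paper's constant $u_1=\gamma$. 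In short, neither (I) nor (II)(ii) alone suffices to verify (iii).

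What you are missing is hypothesis (II)(i), which you never invoke. The paper's argument uses it decisively: since $f(x,t)<0$ for $|t|\in(0,1)$, one has $F(x,t)\le F(x,0)=0$ for all $t\in[0,1)$. Choosing $r=\tfrac{1}{p^{+}}(\beta/c_1)^{p^{+}}$ with $\beta<\min\{1,c_1\}$, Corollary~\ref{cor1} forces $|u(x)|\le\beta<1$ on the sublevel set $\Phi(u)\le r$, hence
\[
\inf_{\Phi(u)\le r}\Psi(u)\;\ge\;0\;>\;\frac{r\,\Psi(u_1)}{\Phi(u_1)},
\]
which is (iii). Thus the verification of (iii) is qualitative, not quantitative, and rests on the sign condition (II)(i) that your proposal omits.
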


\begin{proof}
We only need to prove the conditions \textit{(i)}, \textit{(ii)} and \textit{%
(iii)} in Theorem \ref{teo5}. Using Proposition \ref{pro3} we get%
\begin{eqnarray}
L_{\beta (x)}\left( u\right) &=&\dint\limits_{\Omega }\frac{\omega _{1}(x)}{%
p\left( x\right) }\left\vert \nabla u\right\vert ^{p\left( x\right)
}dx+\int\limits_{\partial \Omega }\frac{\beta (x)}{p(x)}\left\vert
u(x)\right\vert ^{p(x)}d\sigma  \notag \\
&=&\frac{1}{p^{+}}I_{\beta (x)}(u)  \notag \\
&\geq &\frac{1}{p^{+}}\left\Vert u\right\Vert _{\beta (x)}^{p^{-}}
\label{3.1}
\end{eqnarray}%
for any $u\in W_{\omega _{1},\omega _{2}}^{1,p(.)}\left( \Omega \right) $
with $\left\Vert u\right\Vert _{\beta (x)}>1$.

In addition, due to $(I)$ and H\"{o}lder inequality, we have 
\begin{eqnarray}
-\Psi (u) &=&\int\limits_{\Omega }F(x,u)dx=\int\limits_{\Omega }\left(
\int\limits_{0}^{u(x)}f(x,t)dt\right) dx  \notag \\
&\leq &\int\limits_{\Omega }\left( h(x)\left\vert u(x)\right\vert +\frac{%
c_{2}}{s(x)}\left\vert u(x)\right\vert ^{s(x)}\right) dx  \notag \\
&\leq &2\left\Vert h\right\Vert _{\frac{s(.)}{s(.)-1},\Omega }\left\Vert
u\right\Vert _{s(.),\Omega }+\frac{c_{2}}{s^{-}}\int\limits_{\Omega
}\left\vert u(x)\right\vert ^{s(x)}dx.  \label{3.2}
\end{eqnarray}%
By Corollary \ref{cor4}, there exist the continuous embedding $W_{\omega
_{1},\omega _{2}}^{1,p(.)}\left( \Omega \right) \hookrightarrow
L^{s(.)}(\Omega )$ and the inequality 
\begin{equation}
\int\limits_{\Omega }\left\vert u(x)\right\vert ^{s(x)}dx\leq \max \left\{
\left\Vert u\right\Vert _{s(.),\Omega }^{s^{-}},\left\Vert u\right\Vert
_{s(.),\Omega }^{s^{+}}\right\} \leq c_{3}\left\Vert u\right\Vert _{\beta
(x)}^{s^{+}}\text{.}  \label{3.3}
\end{equation}%
If we use (\ref{3.2}) and (\ref{3.3}), then we get%
\begin{equation}
-\Psi (u)\leq 2C_{7}\left\Vert h\right\Vert _{\frac{s(.)}{s(.)-1},\Omega
}\left\Vert u\right\Vert _{\beta (x)}+\frac{c_{4}}{s^{-}}\left\Vert
u\right\Vert _{\beta (x)}^{s^{+}}.  \label{3.4}
\end{equation}%
For any $\lambda >0$ we can write%
\begin{equation*}
L_{\beta (x)}\left( u\right) +\lambda \Psi (u)\geq \frac{1}{p^{+}}\left\Vert
u\right\Vert _{\beta (x)}^{p^{-}}-2\lambda C_{7}\left\Vert h\right\Vert _{%
\frac{s(.)}{s(.)-1},\Omega }\left\Vert u\right\Vert _{\beta (x)}-\frac{c_{4}%
}{s^{-}}\lambda \left\Vert u\right\Vert _{\beta (x)}^{s^{+}}
\end{equation*}%
by (\ref{3.1}) and (\ref{3.4}). Since $1<s^{+}<p^{-}$, $\underset{\left\Vert
u\right\Vert _{\beta (x)}\rightarrow \infty }{\lim }\left( L_{\beta
(x)}\left( u\right) +\lambda \Psi (u)\right) =\infty $ for all $\lambda >0$
and the proof of \textit{(i)} is completed.

Due to $\frac{\partial F(x,t)}{\partial t}=f(x,t)$ and $(II)$, it is easy to
see that $F(x,t)$ is increasing and decreasing for $t\in \left( t_{0},\infty
\right) $ and $\left( 0,1\right) $ with respect to $x\in \Omega ,$
respectively. Since $F(x,t)\geq kt$ uniformly for $x$, we have $%
F(x,t)\rightarrow \infty $ as $t\rightarrow \infty $. Then for a real number 
$\delta >t_{0},$ we can obtain 
\begin{equation}
F(x,t)\geq 0=F(x,0)\geq F(x,\tau ),\text{ for all }x\in \Omega ,\text{ }%
t>\delta ,\text{ }\tau \in \left( 0,1\right) .  \label{3.5}
\end{equation}%
Let $\beta ,\gamma $ be two real numbers such that $0<\beta <\min \left\{
1,c_{1}\right\} $, where $c_{1}$ is given in Corollary \ref{cor1}, and $%
\gamma >\delta $ $(\gamma >1)$ satisfies $\gamma ^{p^{-}}\left\Vert \beta
\right\Vert _{1,\partial \Omega }>1$. If we use relation (\ref{3.5}), then
we have $F(x,t)\leq F(x,0)=0$ for $t\in \left[ 0,\beta \right] $, and%
\begin{equation}
\int\limits_{\Omega }\sup_{0\leq t\leq \beta }F(x,t)dx\leq
\int\limits_{\Omega }F(x,0)dx=0.  \label{3.6}
\end{equation}%
Using $\gamma >\delta $ and (\ref{3.5}), we have $\int\limits_{\Omega
}F(x,\delta )dx>0$ and 
\begin{equation}
\frac{1}{c_{1}^{p^{+}}}\frac{\beta ^{+}}{\gamma ^{p^{-}}}\int\limits_{\Omega
}F(x,\delta )dx>0.  \label{3.7}
\end{equation}%
If we use the inequalities in (\ref{3.6}) and (\ref{3.7}), then we get%
\begin{equation*}
\int\limits_{\Omega }\sup_{0\leq t\leq a}F(x,t)dx\leq 0<\frac{1}{%
c_{1}^{p^{+}}}\frac{\beta ^{+}}{\gamma ^{p^{-}}}\int\limits_{\Omega
}F(x,\delta )dx.
\end{equation*}%
Define $u_{0},u_{1}\in W_{\omega _{1},\omega _{2}}^{1,p(.)}\left( \Omega
\right) $ with $u_{0}(x)=0$ and $u_{1}(x)=\gamma $ for any $x\in \Omega .$
If we take $r=\frac{1}{p^{+}}\left( \frac{\beta }{c_{1}}\right) ^{p^{+}}$,
then $r\in \left( 0,1\right) ,$ $L_{\beta (x)}(u_{0})=\Psi (u_{0})=0$ and 
\begin{eqnarray*}
L_{\beta (x)}(u_{1}) &=&\int\limits_{\partial \Omega }\frac{\beta (x)}{p(x)}%
\gamma ^{p(x)}d\sigma \geq \frac{\gamma ^{p^{-}}}{p^{+}}\int\limits_{%
\partial \Omega }\beta (x)d\sigma =\frac{1}{p^{+}}\gamma ^{p^{-}}\left\Vert
\beta \right\Vert _{1,\partial \Omega } \\
&\geq &\frac{1}{p^{+}}>r.
\end{eqnarray*}%
Thus we have $L_{\beta (x)}(u_{0})<r<L_{\beta (x)}(u_{1})$ and%
\begin{equation*}
\Psi (u_{1})=-\int\limits_{\Omega }F(x,u_{1})dx=-\int\limits_{\Omega
}F(x,\gamma )dx<0.
\end{equation*}%
Then the proof of \textit{(ii)} is obtained.

On the other hand, we have%
\begin{eqnarray*}
-\frac{\left( L_{\beta (x)}(u_{1})-r\right) \Psi (u_{0})+\left( r-L_{\beta
(x)}(u_{0})\right) \Psi (u_{1})}{L_{\beta (x)}(u_{1})-L_{\beta (x)}(u_{0})}
&=&-r\frac{\Psi (u_{1})}{L_{\beta (x)}(u_{1})} \\
&=&r\frac{\int\limits_{\Omega }F(x,\gamma )dx}{\int\limits_{\partial \Omega }%
\frac{\beta (x)}{p(x)}\gamma ^{p(x)}d\sigma }>0.
\end{eqnarray*}%
Now, let $u\in W_{\omega _{1},\omega _{2}}^{1,p(.)}\left( \Omega \right) $
with $L_{\beta (x)}(u)\leq r<1.$ Since $\frac{1}{p^{+}}I_{\beta (x)}(u)\leq
L_{\beta (x)}(u)\leq r$, we obtain 
\begin{equation*}
I_{\beta (x)}(u)\leq p^{+}r=\left( \frac{\beta }{c_{1}}\right) ^{p^{+}}<1.
\end{equation*}%
Due to Proposition \ref{pro2}, we see that $\left\Vert u\right\Vert _{\beta
(x)}<1$ and 
\begin{equation*}
\frac{1}{p^{+}}\left\Vert u\right\Vert _{\beta (x)}^{p^{+}}\leq \frac{1}{%
p^{+}}I_{\beta (x)}(u)\leq L_{\beta (x)}(u)\leq r.
\end{equation*}%
Then using Corollary \ref{cor1}, we can get 
\begin{equation*}
\left\vert u(x)\right\vert \leq c_{1}\left\Vert u\right\Vert _{\beta
(x)}\leq c_{1}\left( p^{+}r\right) ^{\frac{1}{p^{+}}}=\beta
\end{equation*}%
for all $u\in W_{\omega _{1},\omega _{2}}^{1,p(.)}\left( \Omega \right) $
and $x\in \Omega $ with $\Phi (u)\leq r.$

The last inequality implies that%
\begin{equation*}
-\underset{u\in \Phi ^{-1}\left( \left( -\infty ,r\right] \right) }{\inf }%
\Psi (u)=\underset{u\in \Phi ^{-1}\left( \left( -\infty ,r\right] \right) }{%
\sup }-\Psi (u)\leq \int\limits_{\Omega }\sup_{0\leq t\leq \beta
}F(x,t)dx\leq 0.
\end{equation*}%
Then we have%
\begin{equation*}
-\underset{u\in \Phi ^{-1}\left( \left( -\infty ,r\right] \right) }{\inf }%
\Psi (u)<r\frac{\int\limits_{\Omega }F(x,\gamma )dx}{\int\limits_{\partial
\Omega }\frac{\beta (x)}{p(x)}\gamma ^{p(x)}d\sigma }
\end{equation*}%
and%
\begin{equation*}
\underset{u\in \Phi ^{-1}\left( \left( -\infty ,r\right] \right) }{\inf }%
\Psi (u)>\frac{\left( \Phi (u_{1})-r\right) \Psi (u_{0})+\left( r-\Phi
(u_{0})\right) \Psi (u_{1})}{\Phi (u_{1})-\Phi (u_{0})}.
\end{equation*}%
This completes the proof.
\end{proof}

\end{document}